\theoremstyle{plain}
\theoremstyle{plain}
\newtheorem{thm}{Theorem}
\newtheorem{prop}{Proposition}
\newtheorem{lem}{Lemma}
\newtheorem{coro}{Corollary}
\theoremstyle{definition}
\newtheorem{rem}{Remark}
\newtheorem{definition}{Definition}
\begin{document}
\setcounter{page}{1}

\title[topological lattice rings with $AM$-property]{ topological lattice rings with $AM$-property}

\author[Omid Zabeti ]{Omid Zabeti}

\address{ Department of Mathematics, University of Sistan and Baluchestan, P.O. Box: 98135-674, Zahedan, Iran.}
\email{{o.zabeti@gmail.com}}

\subjclass[2010]{13J25, 06F25.}

\keywords{Locally solid lattice ring, bounded group homomorphism, $AM$-property, Levi property, Lebesgue property.}

\date{Received: xxxxxx; Revised: yyyyyy; Accepted: zzzzzz.}

\begin{abstract}
Motivated by the recent definition of $AM$-property in locally solid vector lattices [O. Zabeti, arXiv: 1912.00141v2 [math.FA]], in this note, we try to investigate those results in the category of all locally solid lattice rings.
In fact, we characterize locally solid lattice rings in which order bounded sets and bounded sets agree. Furthermore, with the aid of $AM$-property, we find conditions under that, order bounded group homomorphisms and different types of bounded group homomorphisms coincide. Moreover, we show that each class of bounded order bounded group homomorphisms on a locally solid lattice ring $X$ has the Lebegsue or the Levi property if and only if so is $X$.
\end{abstract}
\date{\today}
\maketitle

\section{motivation and Preliminaries}
Let us start with some motivation. In general, combination between different aspects of mathematics usually arise more efficient results and applications. For example, a topological vector space is a combination between linear algebra and topology. Moreover, a locally solid vector lattice is a powerful connection between ordered sets, linear algebra, and topology. These notions have been studied sufficiently because many classical examples in functional analysis fit in this category. Among these objects, there are other topological algebraic structures that possess order connection, too. This leads us to the theory of ordered groups and ordered rings. When we add appropriate topological connections to them, we obtain more fruitful structures, for example locally solid lattice groups and locally solid lattice rings. These concepts are almost unexplored with respect to locally solid vector lattices although there are many applicable examples of them that fail to have either a vector space or a topological vector space structure for example discrete topology, box topology on product spaces, the multiplicative group $S^1$, the integers, and so on.

So, it is of independent interest to discover these phenomena. Recently, a suitable reference regarding lattice ordered groups has been announced in \cite{H}. Furthermore, lattice ordered rings is partially considered in \cite{J}.

On the other hand, in \cite{MZ}, it is shown that there are several types of bounded group homomorphisms between topological rings; with respect to suitable topologies, each class of them forms a topological ring, too. Moreover, when $X$ is a locally solid lattice ring, each class of bounded order bounded group homomorphisms, also, forms a locally solid lattice ring. This is done in \cite{Z2}, recently. Before, we proceed with some preliminaries, let us again present some detailed motivation.
It is worthwhile to mention that although it might seem at the first glance that there is no advantage in topological groups and topological rings with respect to topological vector spaces, but there are some less considered facts about them. For example, we know that the discrete topology is the most powerful topology but the only topological vector space with the discrete topology is the zero one. On the other hand, any group with the discrete topology forms a topological group.
Furthermore, box topology is important in product spaces because of finer neighborhoods with respect to the product topology and also to construct counterexamples; nevertheless, product of topological vector spaces with the box topology is not a topological vector space but this happens for product of topological groups.

The known Hahn-Banach theorem that relies on the scalar multiplication, appears in many situations when we are dealing with locally convex spaces. The bad news is that we lack it in the category of topological groups and there is no fruitful tool we can use it instead. Furthermore, many results regarding $AM$-property and applications utilize this theorem in their nature. So, we can not expect those results in the setting of topological groups, directly. The good news is that when we are working with topological rings, the multiplication is a handy tool in this way which turns out to be the right object for our purpose.
In fact, the main aim of this note, is to characterize rings and also group homomorphisms in which bounded and order bounded notions agree. This is done by using the concept "$AM$-property" that is defined at first in \cite{Z4} in the category of all locally solid vector lattices. Moreover, as an application, we show that each class of bounded order bounded group homomorphisms defined on a locally solid lattice ring $X$, has the Lebesgue or the Levi property if and only if so is $X$. The lattice structures for these classes of homomorphisms have been obtained recently in \cite{Z2}.

Suppose $X$ is a topological group. A set $B\subseteq G$ is said to be {\bf bounded} if for each neighborhood $U$ at the identity, there is a positive integer $n$ with $B\subseteq nU$ in which $nU=\{x_1+\ldots+x_n: x_i\in U\}$.

A  lattice group ( $\ell$-group)  $G$ is called {\bf order complete} if every non-empty bounded above subset of $G$ has a supremum. $G$ is {\bf Archimedean} if $nx\leq y$ for each $n\in \Bbb N$ implies that $x\leq 0$. It is easy to see that every order complete $\ell$-group is Archimedean. A set $S\subseteq G$ is called {\bf solid} if $x\in G$, $y\in S$ and $|x|\leq |y|$ imply that $x\in S$. Also, recall that a group topology $\tau$ on an $\ell$-group $G$ is referred to as  {\bf locally solid} if it has a local basis at the identity consisting of solid sets.


Suppose $G$ is a locally solid $\ell$-group. A net $(x_{\alpha})\subseteq G$ is said to be {\bf order} convergent to $x\in G$ if there exists a net $(z_{\beta})$ ( possibly over a different index set) such that $z_{\beta}\downarrow 0$ and for every $\beta$, there is an $\alpha_0$ with $|x_{\alpha}-x|\leq z_{\beta}$ for each $\alpha\ge \alpha_0$. A set $A\subseteq G$ is called {\bf order closed} if it contains limits of all order convergent nets which lie in $A$.
 Keep in mind that topology $\tau$ on a locally solid $\ell$-group $(G,\tau)$ is referred to as {\bf Fatou} if it has a local basis at the identity consists of solid order closed neighborhoods.
 Observe that a locally solid $\ell$-group $(G,\tau)$ is said to have the {\bf Levi property} if every $\tau$-bounded upward directed set in $G_{+}$ has a supremum.
 Finally, recall that a locally solid $\ell$-group $(G,\tau)$ possesses the {\bf Lebesgue property} if for every net $(u_{\alpha})$ in $G$, $u_{\alpha}\downarrow 0$ implies that $u_{\alpha}\xrightarrow{\tau}0$.
For undefined expressions and related topics, see \cite{H,Z3}.

Now, suppose $X$ is a topological ring. A set $B\subseteq X$ is called {\bf bounded} if for each zero neighborhood $W$, there is a zero neighborhood $V$ with $VB\subseteq W$ and $BV\subseteq W$.
 A lattice ring ( $\ell$-ring) is a ring that is also a lattice where the ring multiplication and the lattice structure are compatible via the inequality $|x.y|\leq |x|.|y|$. By a topological $\ell$-ring, we mean a topological ring which is an $\ell$-ring, simultaneously. Moreover, observe that a locally solid $\ell$-ring is a topological $\ell$-ring that possesses a local basis consisting of solid sets. Also, note that since in this case, the underlying topological group is also locally solid, all of the properties regarding locally solid $\ell$-groups, mentioned above, can be transformed directly to the category of all locally solid $\ell$-rings; because in this case, order structure in a ring and the underlying group is the same, just, we need to replace boundedness in some statements with the one related to topological rings. Moreover, note that by an {\bf ideal} $I$ of an $\ell$-ring $X$, we mean a solid subring of $X$.

 Suppose $X$ is a locally solid $\ell$-ring. Then, it is called a  Birkhoff and Pierce ring ($f$-ring) if it satisfies in this property: $a\wedge b = 0$
and $c > 0$ imply that $ca\wedge b = ac\wedge b = 0$. For ample facts regarding this subject, see \cite{J}.

For a brief but informed context related to topological lattice rings, we refer the reader to \cite{Z2}.





\section{main results}

{\bf Observation}. Suppose $G$ is an Archimedean $\ell$-group. For every subset $A$, by $A^{\vee}$, we mean the set of all finite suprema of elements of $A$; more precisely,
$A^{\vee}=\{a_1\vee\ldots\vee a_n: n\in \Bbb N, a_i\in A\}$. It is obvious that $A$ is bounded above in $G$ if and only if so is $A^{\vee}$ and in this case, when the supremum exists, $\sup A=\sup A^{\vee}$. Moreover, put $A^{\wedge}=\{a_1\wedge\ldots\wedge a_n: n\in \Bbb N, a_i\in A\}$. It is easy to see that $A$ is bounded below if and only if so is $A^{\wedge}$ and $\inf A=\inf A^{\wedge}$ ( when the infimum exists). Observe that  $A^{\vee}$ can be viewed as an upward directed set in $G$ and $A^{\wedge}$ can be considered as a downward directed set.

Suppose $G$ is a locally solid $\ell$-group. We say that $G$ has {\bf $AM$-property} provided that for every bounded set $B\subseteq G$, $B^{\vee}$ is also bounded. It is worthwhile to mention that when $B$ is bounded and solid, $B^{\vee}$ is bounded if and only if $B^{\wedge}$ is bounded; this follows from the fact that $G$ is locally solid and $x_1\wedge\ldots\wedge x_n=-((-x_1)\vee\ldots\vee(-x_n))$ for any $n\in \Bbb N$ and any $x_i\in B$. One can consider this definition exactly for Archimedean $\ell$-rings. Note that when the ring multiplication is zero, every locally solid $\ell$-ring possesses $AM$-property.
This definition was originally defined in \cite{Z4} for locally solid vector lattices.

Let us first prove a version of \cite[Theorem 3.1]{Z} for topological rings.
\begin{thm}\label{17}
Let $(X_{\alpha})_{\alpha \in A}$ be a family of topological rings and $X=\prod_{\alpha\in A}X_{\alpha}$ with the product topology and coordinate-wise multiplication. Then $B\subseteq X$ is bounded if and only if there exists a family of subsets $(B_{\alpha})_{\alpha\in A}$ such that each $B_{\alpha}\subseteq X_{\alpha}$ is bounded and $B\subseteq \prod_{\alpha\in A}B_{\alpha}$.
\end{thm}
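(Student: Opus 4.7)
My plan is to prove both implications by reducing to the coordinate factors via the canonical projections, exploiting the fact that in the product topology each projection $\pi_\alpha\colon X\to X_\alpha$ is a continuous, open ring homomorphism (the latter because multiplication on $X$ is coordinatewise).

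For the ``only if'' direction, suppose $B\subseteq X$ is bounded and put $B_\alpha=\pi_\alpha(B)$. Clearly $B\subseteq\prod_{\alpha}B_\alpha$. To see each $B_\alpha$ is bounded in $X_\alpha$, fix a zero neighbourhood $W_\alpha\subseteq X_\alpha$. Then $\pi_\alpha^{-1}(W_\alpha)$ is a zero neighbourhood in $X$, so by hypothesis there exists a zero neighbourhood $V\subseteq X$ with $VB\subseteq \pi_\alpha^{-1}(W_\alpha)$ and $BV\subseteq \pi_\alpha^{-1}(W_\alpha)$. Applying $\pi_\alpha$ and using that $\pi_\alpha$ is a ring homomorphism yields $\pi_\alpha(V)B_\alpha\subseteq W_\alpha$ and $B_\alpha\pi_\alpha(V)\subseteq W_\alpha$. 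Since $\pi_\alpha$ is an open map, $V_\alpha:=\pi_\alpha(V)$ is a zero neighbourhood of $X_\alpha$, which gives boundedness of $B_\alpha$.

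For the ``if'' direction, assume each $B_\alpha$ is bounded and $B\subseteq\prod_\alpha B_\alpha$. Given a zero neighbourhood $W\subseteq X$, pick a basic one $W=\prod_\alpha W_\alpha$ contained in it, where $W_\alpha=X_\alpha$ for all $\alpha$ outside a finite set $F$. For each $\alpha\in F$ use boundedness of $B_\alpha$ to choose a zero neighbourhood $V_\alpha\subseteq X_\alpha$ with $V_\alpha B_\alpha\subseteq W_\alpha$ and $B_\alpha V_\alpha\subseteq W_\alpha$; set $V_\alpha=X_\alpha$ for $\alpha\notin F$. Then $V:=\prod_\alpha V_\alpha$ is a zero neighbourhood in $X$ and, using coordinatewise multiplication,
\[
VB\subseteq V\cdot\prod_\alpha B_\alpha\subseteq\prod_\alpha V_\alpha B_\alpha\subseteq\prod_\alpha W_\alpha=W,
\]
and analogously $BV\subseteq W$, so $B$ is bounded.

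The only slightly delicate point is the forward direction, where one has to remember that $\pi_\alpha$ is not only continuous but also \emph{open} (a standard property of projections in the product topology), so that $\pi_\alpha(V)$ really qualifies as a zero neighbourhood in $X_\alpha$; everything else is formal manipulation with the definition of boundedness in a topological ring and the coordinatewise character of the multiplication.
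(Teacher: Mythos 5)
Your proof is correct and follows essentially the same route as the paper: the forward direction takes $B_\alpha=\pi_\alpha(B)$ and projects a suitable zero neighbourhood $V$ of $X$ down to $X_\alpha$ (the paper calls $\pi_\alpha(V)$ the ``$\alpha$-th component of $V$'' and uses it in the same way), and the converse direction uses a basic product neighbourhood and coordinatewise multiplication exactly as in the paper. Your explicit remark that the projections are open, so $\pi_\alpha(V)$ is genuinely a zero neighbourhood, is a point the paper leaves implicit, and you also check the two-sided conditions $VB$ and $BV$ which the paper only verifies on one side; these are welcome refinements but not a different argument.
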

\begin{proof}
Suppose $B\subseteq X$ is bounded. Put
\begin{center}
$B_{\alpha}=\{x\in X_{\alpha}: \exists y=(y_{\beta})\in B   $ {and} $   x  $ {is} $ {\alpha} $ {-th coordinate of} $ y\}.$
\end{center}
Each $B_{\alpha}$ is bounded. For, if $U_{\alpha}$ is a zero neighborhood in $X_{\alpha}$, put
\[U=U_{\alpha}\times\prod_{\beta\neq \alpha}X_{\beta}.\]
Indeed, $U$ is a zero neighborhood in $X$. Therefore, there is a zero neighborhood $V$ with $VB\subseteq U$. Suppose $V_{\alpha}$ is the $\alpha-th$ component of $V$; it is clear that $V_{\alpha}B_{\alpha}\subseteq U_{\alpha}$.

For the converse, assume that there is a net $(B_{\alpha})_{\alpha\in A}$ of bounded sets with $B_{\alpha}\subseteq X_{\alpha}$ such that $B\subseteq \prod_{\alpha\in A}B_{\alpha}$. It is enough to show that $\prod_{\alpha\in A}B_{\alpha}$ is bounded. Assume that $U$ is an arbitrary zero neighborhood in $X$. So, $U=\prod_{\alpha\in A}U_{\alpha}$ in which $U_{\alpha}=X_{\alpha}$ for all but finitely many $\alpha$; namely, $U_{\alpha_i}\neq X_{\alpha_i}$ for $i\in \{1,2,\ldots,n\}$. Find zero neighborhoods $V_{\alpha_i}$ with $V_{\alpha_i}B_{\alpha_i}\subseteq U_{\alpha_i}$. Put $V=\prod_{i=1}^{n}V_{\alpha_i}\times \prod_{\beta\neq \{\alpha_1,\ldots,\alpha_n\}}X_{\beta}$. It is now easy to see that $V(\prod_{\alpha\in A}B_{\alpha})\subseteq U$, as claimed.
\end{proof}
\begin{prop}\label{000}
Suppose $(X_{\alpha})_{\alpha\in A}$ is a family of locally solid $\ell$-rings. Put $X=\prod_{\alpha\in A}{X_{\alpha}}$ with the product topology, pointwise ordering, and coordinate-wise multiplication. If each $X_{\alpha}$ has $AM$ property, then so is $X$.
\end{prop}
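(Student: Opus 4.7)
The plan is to combine Theorem \ref{17} with the coordinate-wise nature of the lattice operations in the product. The key observation is that in $X = \prod_{\alpha \in A} X_{\alpha}$ with the pointwise order, a finite supremum $x_1 \vee \ldots \vee x_n$ is computed coordinate-wise, i.e.\ its $\alpha$-th coordinate is the supremum (in $X_{\alpha}$) of the $\alpha$-th coordinates of the $x_i$. This lets us push the operation $(\cdot)^{\vee}$ through the product.

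First I would take an arbitrary bounded $B \subseteq X$ and apply Theorem \ref{17} to obtain a family $(B_{\alpha})_{\alpha \in A}$ with each $B_{\alpha} \subseteq X_{\alpha}$ bounded and $B \subseteq \prod_{\alpha \in A} B_{\alpha}$. Next, using the coordinate-wise description of suprema, I would verify the set-theoretic inclusion
\[
B^{\vee} \subseteq \Bigl(\prod_{\alpha \in A} B_{\alpha}\Bigr)^{\vee} \subseteq \prod_{\alpha \in A} B_{\alpha}^{\vee}.
\]
Indeed, if $b_1,\ldots,b_n \in B \subseteq \prod_{\alpha} B_{\alpha}$, then for each $\alpha$ the $\alpha$-th coordinates of $b_1,\ldots,b_n$ lie in $B_{\alpha}$, so the $\alpha$-th coordinate of $b_1 \vee \ldots \vee b_n$ lies in $B_{\alpha}^{\vee}$.

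Now I would invoke the hypothesis: since each $X_{\alpha}$ has the $AM$-property and each $B_{\alpha}$ is bounded, each $B_{\alpha}^{\vee}$ is bounded in $X_{\alpha}$. Applying Theorem \ref{17} a second time (in the reverse direction), the product $\prod_{\alpha \in A} B_{\alpha}^{\vee}$ is bounded in $X$. Since subsets of bounded sets in a topological ring are bounded, $B^{\vee}$ is bounded, which is exactly the $AM$-property for $X$.

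I do not expect any real obstacle here; the work has been done already in Theorem \ref{17}, and the only thing to be slightly careful about is the direction of the inclusion $B^{\vee} \subseteq \prod_{\alpha} B_{\alpha}^{\vee}$ (rather than equality, which need not hold), but a containment is all that is needed for boundedness to transfer. No use of the ring multiplication or the Archimedean hypothesis is required beyond what is already encoded in the definition of $AM$-property and in Theorem \ref{17}.
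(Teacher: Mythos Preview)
Your argument is correct and follows the same approach as the paper: both proofs reduce to Theorem \ref{17} together with the coordinate-wise computation of finite suprema in the product. Your presentation is in fact slightly cleaner, since you invoke Theorem \ref{17} in the reverse direction to conclude that $\prod_{\alpha} B_{\alpha}^{\vee}$ is bounded, whereas the paper essentially unwinds that direction by hand, working with a basic neighborhood $W=\prod_{i=1}^{n}U_{\alpha_i}\times\prod_{\beta\neq\alpha_i}X_{\beta}$ and checking directly that a single $V$ absorbs every finite supremum from $B$.
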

\begin{proof}
Suppose $B\subseteq X$ is bounded. By Theorem \ref{17}, there exists a net $(B_{\alpha})_{\alpha\in A}$ such that for each $\alpha$, $B_{\alpha}\subseteq X_{\alpha}$ is bounded and $B\subseteq \prod_{\alpha\in A}B_{\alpha}$. We show that $B^{\vee}$ is also bounded. Let $W$ be an arbitrary zero neighborhood in $X$. So, there are zero neighborhoods $(U_{\alpha_i})_{i\in\{1,\ldots,n\}}$  such that $W=\prod_{i=1}^{n}U_{\alpha_i}\times \prod_{\beta\in A-\{\alpha_1,\ldots,\alpha_n\}}X_{\beta}$.

Observe that for each $x\in B$, there is a net $(x_{\beta})_{\beta \in A}$ with $x_{\beta}\in B_{\beta}$. Now, consider the set $\{x_1,\ldots,x_m\}\subseteq B$ in which $m\in \Bbb N$ is fixed but arbitrary. It is enough to show that $x_1\vee\ldots\vee x_m$ is also bounded. Note that
\[x_1\vee\ldots\vee x_m=(x_{\beta}^{1})\vee\ldots\vee(x_{\beta}^{m})=(x_{\beta}^{1}\vee\ldots\vee x_{\beta}^{m})_{\beta\in A}.\]
Where $x_{\beta}^{j}\in B_{\beta}$ for each $j\in\{1,\ldots m\}$. For each $i\in\{1,\ldots n\}$, $B_{\alpha_i}$ has $AM$-property so that choose zero neighborhoods $(V_{\alpha_i})_{i=1}^{n}$ such that $V_{\alpha_i}(x_{\alpha_i}^{1}\vee\ldots\vee x_{\alpha_i}^{m})\subseteq U_{\alpha_i}$. Find zero neighborhood $V$ with $V\subseteq \cap_{i=1}^{n}V_{\alpha_i}$. Then, it can be easily seen  that $V(x_1\vee\ldots\vee x_m)\subseteq W$, as claimed.
\end{proof}
\begin{prop}\label{111}
Suppose $(X_{\alpha})_{\alpha\in A}$ is a family of locally solid $\ell$-rings. Put $X=\prod_{\alpha\in A}{X_{\alpha}}$ with the product topology, pointwise ordering, and coordinate-wise multiplication. If each $X_{\alpha}$ has the Levi property, then so is $X$.
\end{prop}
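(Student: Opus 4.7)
The plan is to pass through each coordinate and assemble the supremum componentwise, using the Levi property in each factor.

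First I would take an upward directed $\tau$-bounded set $D \subseteq X_{+}$ and apply Theorem \ref{17} to produce bounded sets $B_{\alpha} \subseteq X_{\alpha}$ with $D \subseteq \prod_{\alpha \in A} B_{\alpha}$. Then for each $\alpha$ I define $D_{\alpha} = \pi_{\alpha}(D)$, the projection of $D$ onto the $\alpha$-th coordinate. Since the ordering on $X$ is pointwise and $D \subseteq X_{+}$, we have $D_{\alpha} \subseteq (X_{\alpha})_{+}$; since $D \subseteq \prod B_{\alpha}$, $D_{\alpha} \subseteq B_{\alpha}$ is bounded in $X_{\alpha}$; and upward directedness of $D_{\alpha}$ follows from that of $D$ (given $x_{\alpha}, y_{\alpha} \in D_{\alpha}$ coming from $x, y \in D$, pick $z \in D$ with $x, y \leq z$ and note $x_{\alpha}, y_{\alpha} \leq z_{\alpha}$).

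Next I would invoke the Levi property of $X_{\alpha}$ to obtain $s_{\alpha} = \sup D_{\alpha}$ in $X_{\alpha}$, and set $s = (s_{\alpha})_{\alpha \in A} \in X$. The verification that $s = \sup D$ in $X$ is then routine: any $d = (d_{\alpha}) \in D$ satisfies $d_{\alpha} \in D_{\alpha} \leq s_{\alpha}$ coordinatewise, so $d \leq s$; and if $t = (t_{\alpha})$ is any upper bound of $D$, then for each fixed $\alpha$, $t_{\alpha}$ is an upper bound of $D_{\alpha}$, whence $s_{\alpha} \leq t_{\alpha}$ and therefore $s \leq t$.

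There is no real obstacle here — the argument essentially amounts to recognizing that pointwise ordering turns suprema of directed sets into coordinatewise suprema. The only place where care is needed is in applying Theorem \ref{17} at the outset so that the coordinate projections $D_{\alpha}$ are guaranteed to be bounded in $X_{\alpha}$; without that, the Levi property of each factor could not be invoked. Once that step is in place, the rest of the proof is a direct coordinatewise check.
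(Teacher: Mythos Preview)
Your proof is correct and follows essentially the same coordinatewise strategy as the paper: project to each factor, invoke the Levi property there, and reassemble the supremum. The only cosmetic differences are that the paper phrases the argument in terms of a bounded increasing net rather than an upward directed set, and it asserts pointwise boundedness directly from the product topology rather than explicitly citing Theorem~\ref{17}; your version is in fact slightly more careful on this last point.
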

\begin{proof}

Suppose $(x^{\beta})_{\beta \in B}$ is a bounded increasing net in $X$. We need to show that its supremum exists. Observe that for each $\beta$, $x^{\beta}=(x^{\beta}_{\alpha})_{\alpha\in A}$. Since $X$ has the product topology, we conclude that the net is pointwise bounded; more precisely, for each fixed $\alpha$, the net $(x^{\beta}_{\alpha})_{\beta\in B}$ is bounded and also increasing in $X_{\alpha}$ so that it has a supremum by the assumption, namely, $y_{\alpha}=\sup\{(x^{\beta}_{\alpha})_{\beta\in B}\}$. Now, it can be easily seen that $y=(y_{\alpha})_{\alpha\in A}=\sup \{(x^{\beta}_{\alpha})_{\alpha\in A,\beta\in B}\}$.

\end{proof}
Observe that Proposition \ref{111}, can be restated exactly for locally solid $\ell$-groups, too. Moreover, when we consider the box topology, we have the following observations. Just, recall that the product of any family of topological groups with respect to the box topology is again a topological group ( see \cite[Chapter 3, Exercise 9]{Taq}).
\begin{prop}
Suppose $(G_{\alpha})_{\alpha\in A}$ is a family of locally solid $\ell$-groups whose singletons are bounded. Put $G=\prod_{\alpha\in A}{G_{\alpha}}$ with the box topology and pointwise ordering. If each $G_{\alpha}$ has the $AM$ property, then so is $G$.
\end{prop}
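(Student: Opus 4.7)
The plan is to imitate the proof of Proposition \ref{000}, accounting for two differences: the topology on $G$ is the box topology rather than the product topology, and the ambient category is that of topological groups rather than topological rings, so boundedness is formulated via $B\subseteq nW$ instead of $VB\subseteq W$.

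Let $B\subseteq G$ be bounded. First I would establish the box-topology analogue of Theorem \ref{17}: continuity of each projection $\pi_\alpha\colon G\to G_\alpha$ (which is continuous for the box topology as well) sends $B$ to a bounded set $B_\alpha\subseteq G_\alpha$, and $B\subseteq\prod_\alpha B_\alpha$. Moreover, box boundedness is strictly stronger than product boundedness in the following sense: for every basic box neighborhood $W=\prod_\alpha W_\alpha$ there is a single $n\in\mathbb N$ with $B\subseteq nW=\prod_\alpha nW_\alpha$, so that $B_\alpha\subseteq nW_\alpha$ with $n$ independent of $\alpha$. The ``singletons bounded'' hypothesis is needed to make this analogue go through cleanly, ensuring that the relevant sets are absorbed by the chosen neighborhoods at all.

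To prove $B^\vee$ is bounded, fix a basic box neighborhood $W=\prod_\alpha W_\alpha$ with each $W_\alpha$ solid. For an arbitrary element $y=x^1\vee\ldots\vee x^m\in B^\vee$ (with $x^i\in B$), the $\alpha$-coordinate is $y_\alpha=x^1_\alpha\vee\ldots\vee x^m_\alpha\in B_\alpha^\vee$. Using the $\ell$-group inequality $|y_\alpha|\le |x^1_\alpha|\vee\ldots\vee |x^m_\alpha|$ together with the $AM$-property of $G_\alpha$ applied to the bounded set $B_\alpha\subseteq nW_\alpha$, one obtains a coordinatewise estimate $y_\alpha\in k_\alpha W_\alpha$. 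Solidity of $W_\alpha$ (and hence of $kW_\alpha$ for every $k$, by a Riesz decomposition argument) guarantees that this estimate can actually be propagated from the absolute value to $y_\alpha$ itself.

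The main obstacle is extracting a single integer $N$ such that $B_\alpha^\vee\subseteq NW_\alpha$ for every $\alpha$ simultaneously: the $AM$-property of each $G_\alpha$ individually yields only a non-uniform $k_\alpha$, and one needs $N\ge\sup_\alpha k_\alpha$. Uniformity should come from combining box boundedness of $B$ (which yields the uniform $n$ in the first step) with the observation that the lattice inequality $|y_\alpha|\le\bigvee_i|x^i_\alpha|$ does not depend on the cardinality $m$ of the supremum, so the required bound can be read off from $n$ and the family $(W_\alpha)$ alone. Once this uniform $N$ is produced, $B^\vee\subseteq NW$, and since $W$ was an arbitrary basic box neighborhood, $G$ has the $AM$-property.
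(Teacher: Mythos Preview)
Your argument has a genuine gap at exactly the point you flag as the ``main obstacle'': the extraction of a single integer $N$ with $B_\alpha^{\vee}\subseteq NW_\alpha$ for every $\alpha$. The $AM$-property of $G_\alpha$ is purely qualitative---it says only that $B_\alpha^{\vee}$ is bounded, not that the number of copies of $W_\alpha$ needed to absorb it is controlled by the $n$ that worked for $B_\alpha$. There is no universal function $N=N(n)$ valid across all locally solid $\ell$-groups with the $AM$-property: for instance, in $G_k=(\mathbb{R}^{k},\|\cdot\|_1)$ the set $B_k=\{e_1/2,\ldots,e_k/2\}$ lies in the open unit ball $W_k$, yet $B_k^{\vee}$ contains $(1/2,\ldots,1/2)$, which has $\ell^1$-norm $k/2$, so any $N$ absorbing $B_k^{\vee}$ into $NW_k$ must grow with $k$. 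Your appeal to the inequality $|y_\alpha|\le\bigvee_i|x^i_\alpha|$ does not help, because that supremum is itself an element of $B_\alpha^{\vee}$ (or of $(|B_\alpha|)^{\vee}$), and you are back to needing the very uniform bound you are trying to establish.

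The paper avoids this obstacle entirely by a different mechanism. It invokes \cite[Theorem~3.4]{Z}, which asserts that in the box topology on a product of topological groups whose singletons are bounded, every bounded set $B$ satisfies
\[
B\subseteq\Bigl(\prod_{i=1}^{n}B_{\alpha_i}\Bigr)\times\prod_{\beta\in A\setminus\{\alpha_1,\ldots,\alpha_n\}}\{e_\beta\},
\]
i.e.\ box-bounded sets are supported on \emph{finitely many} coordinates. Consequently every element of $B^{\vee}$ also has the identity in all coordinates outside $\{\alpha_1,\ldots,\alpha_n\}$, and the $AM$-property need only be invoked in those finitely many factors; a uniform $N$ is then obtained trivially as the maximum of finitely many integers. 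This structural theorem is precisely where the hypothesis ``singletons are bounded'' is used---not in the ad hoc role you assign it.
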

\begin{proof}
Suppose $B\subseteq X$ is bounded. By \cite[Theorem 3.4]{Z}, there exists a family $(\alpha_i)_{i=1,\ldots,n}$ of indices such that $B\subseteq (\prod_{i=1}^{n}B_{\alpha_i})\times \prod_{\beta \in A-\{\alpha_1,\ldots,\alpha_n\}}\{e_{\beta}\}$. Consider a set $\{x_1,\ldots,x_m\}$ in $B$. For each $j=1,\ldots,m$, we can write $x_j=({x_{\beta}}^{j})_{\beta\in A}$ where for $\beta\in A-\{\alpha_1,\ldots,\alpha_n\}$, ${x_{\beta}}^{j}=e_{\beta}$ and ${x^{j}}_{\alpha_i}\in B_{\alpha_i}$ for $i=1,\ldots,n$. Therefore, $x_1\vee\ldots\vee x_m=({x_{\beta}}^{1}\vee \ldots\vee {x_{\beta}}^{m})_{\beta\in A}$. Thus, this supremum is the net consisting of $({x_{\alpha_1}}^{j}\vee\ldots\vee{x_{\alpha_n}}^{j})$ in the $j-th$-place for $j=1,\ldots,m$ and for other terms, the identity. By the assumption, we conclude that $B^{\vee}$ is also bounded.
\end{proof}
Furthermore, by considering this point that when a set in a product space is bounded in the box topology, it is bounded in the product topology and compatible with Proposition \ref{111}, we have the following.
\begin{coro}
Suppose $(G_{\alpha})_{\alpha\in A}$ is a family of locally solid $\ell$-groups. Put $G=\prod_{\alpha\in A}{G_{\alpha}}$ with the box topology and pointwise ordering. If each $G_{\alpha}$ has the Levi property, then so is $G$.
\end{coro}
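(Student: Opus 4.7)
The plan is to exploit the simple relationship between the box topology and the product topology on $G$, together with the group analogue of Proposition \ref{111} (which the author explicitly mentions can be restated for locally solid $\ell$-groups). The key observation, which the remark just before the corollary signals, is that the box topology is finer than the product topology on $G$, and hence any set that is bounded in the box topology is automatically bounded in the product topology: every zero neighborhood for the product topology is also a zero neighborhood for the box topology, so the defining covering condition $B \subseteq nU$ carries over.

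Concretely, I would start with an arbitrary increasing net $(x^\beta)_{\beta \in B} \subseteq G_+$ that is bounded in the box topology, and immediately conclude by the preceding observation that $(x^\beta)$ is bounded in $G$ equipped with the product topology as well. The order structure on $G$ is identical for the two topologies — it is the pointwise ordering inherited from the family $(G_\alpha)$ and makes no reference to the topology. Therefore, applying the $\ell$-group version of Proposition \ref{111} to $(x^\beta)$ inside $(G, \text{product topology})$ yields that $\sup_\beta x^\beta$ exists in $G$. Since the supremum is an order-theoretic notion, this same element is the supremum of $(x^\beta)$ in $(G, \text{box topology})$, which verifies the Levi property.

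I do not expect a genuine obstacle here: everything reduces to the two facts that (i) bounded-in-box implies bounded-in-product, and (ii) suprema depend only on the order, not on the topology. The only point worth writing carefully is the bounded-set comparison, since it is the sole place where the two topologies interact; after that, the corollary is immediate from the $\ell$-group analogue of Proposition \ref{111}.
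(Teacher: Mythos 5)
Your proposal is correct and follows exactly the route the paper intends: the remark preceding the corollary states that box-bounded sets are product-bounded, and the corollary is then immediate from the $\ell$-group version of Proposition \ref{111}, with the supremum being a purely order-theoretic notion. No gaps.
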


Now, we recall some notes about bounded group homomorphisms between topological rings; for a detailed exposition on this concept, see \cite{MZ,Z2}.
\begin{definition}\rm
Let $X$ and $Y$ be topological rings. A group homomorphism $T:X \to
Y$ is said to be
\begin{itemize}
\item[$(1)$] \emph{{\sf nr}-bounded} if there exists a
zero neighborhood $U\subseteq X$  such that $T(U)$ is bounded in $Y$.

\item[$(2)$] \emph{{\sf br}-bounded} if for every bounded set $B
\subseteq X$, $T(B)$ is bounded in $Y$.
\end{itemize}
\end{definition}

The set of all {\sf nr}-bounded ({\sf br}-bounded) homomorphisms
from a topological ring $X$ to a topological ring $Y$ is denoted
by ${\sf Hom_{nr}}(X,Y)$ (${\sf Hom_{br}}(X,Y)$). The set of all continuous homomorphisms from $X$ into $Y$ will be denoted by  ${\sf Hom_{cr}}(X,Y)$.

${\sf Hom_{nr}}(X,Y)$ is equipped with the topology of
uniform convergence on some zero neighborhood; note that a net $(S_{\alpha})$ of ${\sf
nr}$-bounded homomorphisms converges uniformly on a neighborhood $U\subseteq X$ to a homomorphism $S$  if for each zero neighborhood $V\subseteq Y$  there exists an $\alpha_0$ such that for each
$\alpha\geq\alpha_0$, $(S_{\alpha}-S)(U)\subseteq V$. ${\sf Hom_{br}}(X,Y)$ is allocated to the topology of uniform convergence on bounded sets;
observe that a net $(S_{\alpha})$ of ${\sf
br}$-bounded homomorphisms uniformly converges to a homomorphism $S$
on a bounded set $B\subseteq X$ if for each zero neighborhood $V\subseteq Y$
there is an $\alpha_0$ with $(S_{\alpha}-S)(B) \subseteq V$ for
each $\alpha\ge \alpha_0$. ${\sf Hom_{cr}}(X,Y)$ is assigned with the
topology of ${\sf cr}$-convergence; a net $(S_{\alpha})$ of continuous homomorphisms ${\sf cr}$-converges
to a homomorphism $S$ if for each zero neighborhood $W\subseteq Y$, there
is a neighborhood $U\subseteq X$ such that for every zero neighborhood $V\subseteq Y$ there exists an $\alpha_0$ with
$(S_{\alpha}-S)(U)\subseteq VW$ for each $\alpha\geq\alpha_0$.

Each class of bounded homomorphisms as well as continuous homomorphisms between topological rings can have a topological ring structure ( see \cite{MZ} for more information). Moreover, bounded order bounded homomorphisms between topological lattice rings can have lattice structures, using a kind of the Riesz-Kantorovich formulae, this is investigated in \cite{Z2}.
\begin{rem}
It is known that every zero neighborhood in a topological vector space is absorbing so that singletons are bounded. This useful fact relies on the scalar multiplication that we lack in topological groups, certainly. Therefore, we can not expect in a topological group that singletons are bounded, in general. For example, consider the additive group $\Bbb R$ with the usual topology and the additive group $\Bbb Z$ with the discrete topology. Put $G=\Bbb R\times \Bbb Z$. It is easy to see that $(0,1)$ is not bounded in $G$. But in many classical groups, singletons are bounded; for example when $G$ is a connected topological group ( see \cite[Chapter 3, Theorem 6]{Taq}. Moreover, suppose $G$ is a locally convex topological vector space. So, we have two notions for boundedness in $G$; when $G$ is considered as a topological group and when it is considered as a topological vector space. It is easy to see that these two notions agree. Now, suppose a locally solid $\ell$-group $G$ has this mild property. So, we prove that in this case, order bounded sets are bounded. But in general, this is not true, consider \cite[Example 4.2]{H}.
\end{rem}

\begin{lem}\label{100}
Suppose $G$ is a locally solid $\ell$-group whose singletons are bounded. Then, every order bounded set in $G$ is bounded.
\end{lem}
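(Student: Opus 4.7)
The plan is to reduce the statement to principal order intervals $[0,u]$ with $u\ge 0$ and then combine the Riesz decomposition property of $\ell$-groups with the solidity of neighbourhoods. If $A\subseteq[x,y]$ is order bounded, then $A-x\subseteq[0,y-x]$, and since every singleton is bounded by hypothesis and the sum of two bounded sets is easily seen to be bounded, it suffices to show that each interval $[0,u]$ with $u\ge 0$ is bounded.

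Fix such a $u$ and let $W$ be an arbitrary solid zero neighbourhood. Since $\{u\}$ is bounded, there are $n\in\Bbb N$ and $w_1,\ldots,w_n\in W$ with $u=w_1+\cdots+w_n$; as $W$ is solid and $\bigl||w_i|\bigr|=|w_i|$, each $|w_i|$ lies in $W$, while $u=|u|\le|w_1|+\cdots+|w_n|$ because $w_i\le|w_i|$ in any $\ell$-group.

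For an arbitrary $z\in[0,u]$ one now applies the Riesz decomposition property of $\ell$-groups to $0\le z\le|w_1|+\cdots+|w_n|$ to obtain a splitting $z=z_1+\cdots+z_n$ with $0\le z_i\le|w_i|$. Then $|z_i|=z_i\le|w_i|\in W$ forces $z_i\in W$ by solidity, and hence $z\in nW$. This proves $[0,u]\subseteq nW$, which is the desired boundedness.

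The main point to get right, and the only place where all three ingredients (local solidity, the hypothesis on singletons, and the lattice structure) are simultaneously used, is the step converting the sum expression $u=w_1+\cdots+w_n$ for the upper bound into an individual decomposition of each $z\in[0,u]$ whose summands still lie in $W$: the singleton hypothesis is needed to produce the integer $n$ in the first place, solidity is needed to move the estimate across absolute values, and Riesz decomposition is what actually splits $z$ itself.
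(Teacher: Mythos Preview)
Your proof is correct and follows essentially the same idea as the paper's: bound the endpoint by a singleton hypothesis, then use solidity to sweep the interval into $nW$. The paper does this in one line, writing that $|w|\le|u|+|v|\in nU$ and ``$U$ is solid'' give $w\in nU$; your version is more explicit in that it spells out the Riesz decomposition step that justifies why $nW$ inherits solidity, and your preliminary reduction to intervals of the form $[0,u]$ lets you get away with the positive-element form of that decomposition.
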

\begin{proof}
Suppose $[u,v]$ is an order interval in $G$ and $U$ is an arbitrary neighborhood at the identity in $G$. There is a positive integer $n$ with $(|u|+|v|)\in nU$. So, for each $u\leq w\leq v$, since $|w|\leq |u|+|v|$ and $U$ is solid, we conclude that $w\in nU$, as claimed.
\end{proof}
It is known that every singleton in a topological ring is bounded. So, we have the following observation, too.
\begin{lem}\label{55}
Suppose $X$ is a locally solid $\ell$-ring. Then, every order bounded set in $X$ is bounded.
\end{lem}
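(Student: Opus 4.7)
The plan is to mimic the proof of Lemma \ref{100}, but with the topological-group absorption step $|u|+|v|\in nU$ replaced by a multiplicative absorption coming from continuity of the ring multiplication together with the sub-multiplicative inequality $|xy|\leq|x|\,|y|$ that is built into the definition of an $\ell$-ring. This is precisely where the comment preceding the lemma---that every singleton in a topological ring is bounded---enters: what actually makes singletons ring-bounded is continuity of multiplication at zero, and that is the fact being exploited.

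Concretely, let $[u,v]$ be an order interval and $W$ an arbitrary zero neighborhood in $X$. Using local solidity, shrink $W$ to a solid zero neighborhood, and then, using continuity of multiplication at $(0,|u|+|v|)$ and at $(|u|+|v|,0)$, choose a solid zero neighborhood $V$ with $V(|u|+|v|)\subseteq W$ and $(|u|+|v|)V\subseteq W$. For any $w\in[u,v]$, solidity of the order interval gives $|w|\leq|u|+|v|$; for any $v'\in V$, solidity of $V$ gives $|v'|\in V$, and then $|v'w|\leq|v'|\,|w|\leq|v'|(|u|+|v|)\in W$, so solidity of $W$ forces $v'w\in W$. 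The symmetric computation handles $wv'$, so $V[u,v]\subseteq W$ and $[u,v]V\subseteq W$, i.e.\ $[u,v]$ is bounded in the ring sense. An arbitrary order bounded set sits inside some such order interval and is therefore bounded too.

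The main subtlety---and essentially the only obstacle---is to keep the two different notions of boundedness straight: Lemma \ref{100} produces boundedness in the topological-group sense (absorbed by some $nU$), while Lemma \ref{55} requires boundedness in the topological-ring sense (absorbed on both sides by a single zero neighborhood). One therefore cannot simply quote Lemma \ref{100}; the ring multiplication must be brought to bear, and once it is, the auxiliary hypothesis of Lemma \ref{100} that singletons be group-bounded becomes automatic, because continuity of multiplication makes every singleton ring-bounded for free.
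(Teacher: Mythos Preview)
Your proof is correct and follows essentially the same route as the paper: pick a solid zero neighborhood $W$, use ring-boundedness of the singleton $\{|u|+|v|\}$ to produce $V$ with $V(|u|+|v|)\subseteq W$, and then combine $|w|\le |u|+|v|$ with the inequality $|v'w|\le |v'|\,|w|$ and solidity of $W$ to conclude $V[u,v]\subseteq W$. Your write-up is in fact more careful than the paper's---you explicitly take $V$ solid (needed so that $|v'|\in V$) and you verify both the left and right absorption conditions, whereas the paper treats only one side and leaves the solidity of $V$ implicit.
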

\begin{proof}
Suppose $[u,v]$ is an order interval in $X$ and $W$ is an arbitrary zero neighborhood. There is a zero neighborhood $V\subseteq X$  with $V(|u|+|v|)\subseteq W$. So, for each $u\leq x\leq v$, since $|x|\leq |u|+|v|$ and $W$ is solid, we conclude that $Vx\subseteq W$.
\end{proof}
{\bf Observation}. From now on, in this paper, we always assume that all topological groups have this mild property: boundedness of singletons.

Now, we improve \cite[Proposition 2]{Z3}; in fact, the underlying topological group need not be connected, just, it suffices to have boundedness condition for singletons. The proof is essentially the same.
\begin{prop}\label{101}
Suppose $X$ is a topological ring that singletons in the underlying topological group are bounded. Then, we have the following.
\begin{itemize}
		\item[\em (i)] { If $B\subseteq X$ is bounded in the sense of the underlying topological group, then $B$ is bounded}.
		\item[\em (ii)] { If, in addition, $X$ possesses a unity and $B\subseteq X$ is bounded, then, it is bounded in the sense of the topological group}.
		\end{itemize}
\end{prop}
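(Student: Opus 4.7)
My plan is to establish the two parts in order, both following the pattern of \cite[Proposition~2]{Z3} with the hypothesis on singletons substituting for connectedness.

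For (ii), I would argue as follows. Fix an arbitrary zero neighborhood $U\subseteq X$. By the ring-boundedness of $B$, choose a zero neighborhood $V$ with $VB\subseteq U$. The singleton-boundedness hypothesis applied to $\{1\}$ yields a positive integer $n$ with $1\in nV$, i.e.\ $1=v_1+\cdots+v_n$ for some $v_1,\dots,v_n\in V$. Then for every $b\in B$,
\[
b=1\cdot b=v_1 b+\cdots+v_n b\in\underbrace{VB+\cdots+VB}_{n\text{ summands}}\subseteq nU,
\]
so $B\subseteq nU$, establishing group-boundedness.

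For (i), the plan is dual. Given a zero neighborhood $W$, I would pick an auxiliary zero neighborhood $U$, invoke group-boundedness of $B$ to obtain $n$ with $B\subseteq nU$, use continuity of addition to find a zero neighborhood $W_0$ satisfying $nW_0\subseteq W$, and use continuity of multiplication at $(0,0)$ to produce a zero neighborhood $V$ with $VU\subseteq W_0$ and $UV\subseteq W_0$. Putting these together,
\[
VB\subseteq V(nU)=n(VU)\subseteq nW_0\subseteq W,
\]
and symmetrically $BV\subseteq W$, which is ring-boundedness.

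The main technical point is the last step of (i): producing $V$ with $VU\subseteq W_0$ for the $U$ fixed at the outset, since joint continuity of multiplication at $(0,0)$ in general provides only matched pairs $(V',U')$ with $V'U'\subseteq W_0$. This is the precise point at which the original connectedness hypothesis is used in \cite{Z3}, and where it is now replaced by the singleton-bounded hypothesis: one restricts the initial choice of $U$ to zero neighborhoods absorbable in the required sense (for instance those of the form $V_1V_2$ for auxiliary zero neighborhoods $V_1,V_2$), and the hypothesis ensures that this family is rich enough for the construction to go through. With that point handled, the rest of the argument is formal.
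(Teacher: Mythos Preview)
Your argument for (ii) is clean and correct: writing $1=v_1+\cdots+v_n$ with $v_i\in V$ and distributing over $b$ is exactly the right use of the singleton hypothesis in place of connectedness.

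The difficulty is entirely in (i), and you have correctly located it. The scheme ``choose $U$, get $n$ from group-boundedness, get $W_0$ with $nW_0\subseteq W$, then find $V$ with $VU\subseteq W_0$'' is circular: $n$ (hence $W_0$) depends on $U$, while the existence of $V$ with $VU\subseteq W_0$ would require $U$ to be ring-bounded, which is precisely the kind of statement you are trying to prove. Joint continuity at $(0,0)$ gives only a matched pair $(V',U')$ with $V'U'\subseteq W_0$; it does not let you prescribe $U$ in advance.

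Your proposed repair --- restricting $U$ to sets ``of the form $V_1V_2$'' --- does not close the gap. In a general topological ring $V_1V_2$ need not be a zero neighborhood at all (take any ring with zero multiplication, or more generally any ring where products of small elements are very small), so you cannot invoke group-boundedness of $B$ relative to such a $U$. You also do not explain how the singleton-bounded hypothesis would make this family ``rich enough''; that hypothesis is a statement about how \emph{points} sit inside sumsets $nU$, and it is not clear how it produces a basis of ring-bounded zero neighborhoods, which is what your scheme needs.

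The paper itself gives no proof here, only the remark that it is ``essentially the same'' as \cite[Proposition~2]{Z3}. So what you should do is go back to that source and trace exactly how the step ``find $V$ with $VU\subseteq W_0$'' is handled there under connectedness, and then argue carefully that the singleton hypothesis suffices at that specific juncture. As written, your sketch for (i) is a plan with the key step still open.
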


\begin{rem}\label{666}
Note that being unital is a sufficient condition in Proposition \ref{101}; in many classical spaces such as $\ell_p$ for $1\leq p\leq \infty$, $c_0$ and $c_{00}$, it can be verified that notions of boundedness in the sense of topological vector space, underlying topological group, and topological ring ( while they are considered with coordinate-wise multiplication) agree.
\end{rem}
Let us first consider, as an application of $AM$-property, a useful fact about locally solid $\ell$-groups.
\begin{prop}\label{22}
Suppose $G$ is locally solid $\ell$-group. Then, the following are equivalent.
 \begin{itemize}
		\item[\em (i)] { $G$ possesses $AM$ and Levi properties}.
		\item[\em (ii)] { Every order bounded set in $G$ is bounded and vice versa}.
		\end{itemize}
\end{prop}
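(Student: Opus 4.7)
The plan is to prove the equivalence by establishing the two directions separately; both rely on the interplay between the locally solid topology, order intervals, and the finite-join operation, together with the standing hypothesis that singletons in $G$ are bounded (so that Lemma \ref{100} is available).

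For $(i)\Rightarrow(ii)$, one half is Lemma \ref{100} itself: every order bounded set is automatically bounded. For the converse, given a bounded set $B\subseteq G$, the locally solid topology guarantees that $|B|=\{|x|:x\in B\}$ is bounded; the $AM$-property then supplies that the upward directed set $|B|^{\vee}\subseteq G_+$ remains bounded, and the Levi property delivers its supremum $v\in G_+$. Since $|x|\leq v$ for every $x\in B$, it follows that $B\subseteq[-v,v]$ is order bounded.

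For $(ii)\Rightarrow(i)$, the $AM$-property is immediate: if $B$ is bounded then by $(ii)$ it lies in an order interval $[u,v]$, and the elementary inequality $u\leq a_1\vee\cdots\vee a_n\leq v$ for $a_i\in B$ shows that $B^{\vee}\subseteq[u,v]$ is again order bounded, hence bounded by $(ii)$. For the Levi property, I would pick a $\tau$-bounded upward directed net $(x_\alpha)\subseteq G_+$ and invoke $(ii)$ to provide some $v\in G$ with $x_\alpha\leq v$ for all $\alpha$, so that $\{x_\alpha\}\subseteq[0,v]$; then I would exploit the monotonicity of the net and the compatibility of $\tau$ with the order on this interval to realise $\sup_\alpha x_\alpha$ as an element of $G$.

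The main obstacle is precisely this last step: in a general $\ell$-group a bounded-above directed set need not admit a supremum, so citing $(ii)$ alone only yields an upper bound, not the supremum itself. Extracting the supremum therefore has to use the locally solid structure in an essential way rather than treating $G$ as an abstract ordered set, and this is where I would expect the proof to spend most of its effort; all other manipulations reduce to bookkeeping with solid hulls, finite joins, and order intervals.
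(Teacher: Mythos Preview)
Your argument for $(i)\Rightarrow(ii)$ and for the $AM$-half of $(ii)\Rightarrow(i)$ is essentially the paper's: the only cosmetic difference is that you work with $|B|$ while the paper passes to the solid hull of $B$ and then to $B_{+}$, but either device leads to an upward directed bounded set in $G_{+}$ whose supremum, supplied by the Levi property, dominates $B$.

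Where your proposal diverges is the Levi step in $(ii)\Rightarrow(i)$, and here your diagnosis of the obstacle is correct but your proposed cure is not what the paper does---and in fact cannot succeed. You anticipate that ``the locally solid structure'' will somehow manufacture the supremum of a bounded-above increasing net; but no amount of compatibility between $\tau$ and the order will do this in a general locally solid $\ell$-group. The example you should have in mind is $C[0,1]$ with the sup norm: it is a locally solid $\ell$-group with bounded singletons in which condition $(ii)$ holds, yet the Levi property fails. The paper itself makes exactly this point in the paragraph following Theorem~\ref{12}.

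The paper's actual resolution is simply to invoke order completeness: its proof of the Levi part reads ``$D$ has a supremum since $G$ is order complete.'' This hypothesis does not appear in the statement of Proposition~\ref{22} as written, but it is evidently intended---the parallel ring statement, Theorem~\ref{12}, includes it explicitly, and the $C[0,1]$ remark shows it is indispensable. So the honest summary is: your proof plan matches the paper's except at the one place where you sensed trouble, and there the missing ingredient is not a topological argument but the (tacit) assumption that $G$ is order complete.
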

\begin{proof}
$(i)\to (ii)$. The direct implication is trivial by Lemma \ref{100}. For the converse, assume that $B\subseteq G$ is bounded; W.L.O.G, we may assume that $B$ is solid, otherwise, consider the solid hull of $B$ which is again bounded. So, $B_{+}=\{x\in B, x\geq 0\}$ is also bounded. Assume that $(B_{+})^{\vee}$ is the set of all finite suprema of elements of $B_{+}$. By the $AM$-property, $(B_{+})^{\vee}$ is also bounded. In addition, $(B_{+})^{\vee}$ can be considered as an increasing net in $G_{+}$. So, by the Levi property, $\sup (B_{+})^{\vee}$ exists. But in this case, $\sup B_{+}$ also exists and $\sup (B_{+})^{\vee}=\sup B_{+}$. Put $y=\sup B_{+}$. Therefore, for each $x\in B_{+}$, $x\leq y$; now, it is clear from the relation $B\subseteq B_{+}-B_{+}$ that $B$ is also order bounded.

$(ii)\to (i)$. Suppose $B\subseteq G$ is bounded so that order bounded. Now, it is clear that $B^{\vee}$ is also order bounded and therefore bounded.

Suppose $D$ is an upward directed bounded set in $G_{+}$. So, it is order bounded. Now, $D$ has a supremum since $G$ is order complete.
\end{proof}
Assume that $H=\Bbb Z$ with the discrete topology. It is a locally solid $\ell$-group. The only bounded set is the singleton zero and other singletons are never bounded. So, $H$ possesses the Levi and $AM$ properties. Nevertheless, note that every non-zero singleton is order bounded but not bounded. This justifies importance of the above observation ( boundedness of singletons in a topological group).
Moreover, using Proposition \ref{101}, we obtain the following result for locally solid $\ell$-rings.
\begin{coro}\label{66}
  Suppose $X$ is an order complete locally solid $\ell$-ring with unity. Then, the following are equivalent.
  \begin{itemize}
		\item[\em (i)] { $X$ possesses $AM$ and Levi properties}.
		\item[\em (ii)] { Every order bounded set in $X$ is bounded and vice versa}.
		\end{itemize}
\end{coro}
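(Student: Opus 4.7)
The plan is to reduce Corollary \ref{66} to Proposition \ref{22} applied to the underlying locally solid $\ell$-group of $X$. The key observation is that the $AM$-property, the Levi property, and the notion of an order bounded set are defined purely from the $\ell$-group and topological $\ell$-group structure; the ring multiplication enters only through the definition of a bounded set (via $VB\subseteq W$ and $BV\subseteq W$). Therefore, condition (i) and the "order bounded" half of condition (ii) transfer verbatim between $X$ viewed as a locally solid $\ell$-ring and $X$ viewed as a locally solid $\ell$-group.

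The only real subtlety is that the word "bounded" in (ii) is a priori ambiguous: in a topological ring we use the ring-theoretic definition, whereas in the underlying topological group we use the $n$-fold sum definition. This is precisely where the unity assumption enters. The first step I would carry out is to invoke Proposition \ref{101}: since $X$ is unital (and, per the standing observation, singletons in the underlying group are bounded), a set $B\subseteq X$ is bounded in the ring sense if and only if it is bounded in the sense of the underlying topological group. Hence condition (ii) is unchanged under either interpretation of "bounded."

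With this identification in hand, I would then apply Proposition \ref{22} to the underlying locally solid $\ell$-group of $X$. The implication $(i)\Rightarrow(ii)$ uses Lemma \ref{55} (order bounded sets are bounded) together with the argument of Proposition \ref{22} exploiting $AM$ and Levi (take the solid hull, pass to the positive part, form finite suprema, and use the Levi supremum as an upper bound). The implication $(ii)\Rightarrow(i)$ is where the extra order completeness hypothesis of the corollary is used: from (ii) every bounded set is order bounded, so $B^{\vee}$ is order bounded, hence bounded, giving the $AM$-property, while an upward directed bounded set in $X_+$ is order bounded by (ii) and therefore has a supremum by order completeness, giving the Levi property.

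I do not expect any genuine obstacle here: once Proposition \ref{101} has equated the two notions of boundedness, the corollary is essentially a direct transcription of Proposition \ref{22} into the ring setting. The only point that requires mild care is making sure the unity hypothesis is genuinely invoked at exactly one place (the reduction of ring-boundedness to group-boundedness) and that order completeness is invoked at exactly one place (promoting upper bounds to suprema in the Levi half of $(ii)\Rightarrow(i)$).
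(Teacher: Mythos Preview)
Your proposal is correct and follows exactly the paper's approach: the paper derives Corollary \ref{66} from Proposition \ref{22} by invoking Proposition \ref{101} (the unity hypothesis) to identify ring-boundedness with group-boundedness, which is precisely the reduction you outline. Your remarks on where the unity and order completeness hypotheses are actually used are accurate and match how the argument of Proposition \ref{22} carries over.
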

But the interesting point here is that it is not necessary for locally solid $\ell$-ring $X$ to be unital; more precisely, we improve Corollary \ref{66}. The main idea of the proof is essentially as the same as the proof of Proposition \ref{22}.
\begin{thm}\label{12}
  Suppose $X$ is an order complete locally solid $f$-ring. Then, the following are equivalent.
  \begin{itemize}
		\item[\em (i)] { $X$ possesses $AM$ and Levi properties}.
		\item[\em (ii)] { Every order bounded set in $X$ is bounded and vice versa}.
		\end{itemize}
\end{thm}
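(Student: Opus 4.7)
The implication that order bounded sets are bounded is already Lemma \ref{55} and needs no further hypothesis. For the converse half of $(i)\Rightarrow(ii)$, I would follow the template of Proposition \ref{22} but replace its appeal to ``pass to the solid hull'' (which is legal for groups but not for arbitrary $\ell$-rings) by a passage to the pointwise modulus $|B|:=\{|b|:b\in B\}$. The key preliminary observation is that $|B|$ is ring bounded whenever $B$ is. Given a solid zero neighborhood $W$, pick a solid zero neighborhood $V$ with $VB\cup BV\subseteq W$; since $X$ is an $f$-ring we have the identity $|xy|=|x|\,|y|$, so for $v\in V$ and $b\in B$ the computation
\[
\bigl|v\,|b|\bigr|=|v|\,\bigl||b|\bigr|=|v|\,|b|=|vb|
\]
together with $vb\in W$ and solidity of $W$ forces $v|b|\in W$, and symmetrically $|b|v\in W$.

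Once $|B|$ is known to be ring bounded, the $AM$-property gives that $|B|^{\vee}$ is ring bounded. By the Observation preceding the statement of $AM$-property, $|B|^{\vee}$ is an upward directed subset of $X_+$, so the Levi property produces $y:=\sup|B|^{\vee}=\sup|B|$. Then $|b|\le y$ for every $b\in B$, so $B\subseteq[-y,y]$ and $B$ is order bounded. For $(ii)\Rightarrow(i)$ the argument is purely order-theoretic and runs as in Proposition \ref{22}: if $B$ is ring bounded then $(ii)$ places it inside some $[u,v]$, whence $B^{\vee}\subseteq[u,v]$ is order bounded and, again by $(ii)$, ring bounded, giving the $AM$-property; if $D$ is a ring bounded upward directed set in $X_+$, then $(ii)$ makes it order bounded and order completeness of $X$ supplies $\sup D$, giving the Levi property.

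The main obstacle is exactly the first step above: in a general $\ell$-ring one only has the inequality $|xy|\le|x|\,|y|$, which is too weak to transport ring-boundedness from $B$ to its solid hull or to $|B|$, and this is precisely why Corollary \ref{66} had to assume a unit. The $f$-ring identity $|xy|=|x|\,|y|$ removes this obstruction and lets the rest of the Proposition \ref{22} strategy go through unchanged, so no further structural hypothesis (such as the existence of a unity) is needed.
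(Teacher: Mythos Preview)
Your proof is correct and follows the same overall strategy as the paper: from a ring-bounded set $B$, extract a positive ring-bounded set, apply the $AM$-property, then the Levi property, and conclude $B\subseteq[-y,y]$; the $(ii)\Rightarrow(i)$ direction is identical in both arguments. The only difference lies in how the positive set is obtained. Contrary to your assumption, the paper \emph{does} pass to the solid hull of $B$ in the ring setting, justifying its ring-boundedness by citing an external result (\cite[Lemma~5]{Z2}), and then works with $B_{+}$. Your route through $|B|$ and the explicit $f$-ring identity $|xy|=|x|\,|y|$ achieves the same end---indeed, that identity is exactly what drives the cited lemma---and has the advantage of being self-contained and of making transparent why the $f$-ring hypothesis, rather than the bare $\ell$-ring inequality $|xy|\le|x|\,|y|$, is what allows one to dispense with the unit assumed in Corollary~\ref{66}.
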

\begin{proof}
$(i)\to (ii)$. The direct implication is trivial by Lemma \ref{55}. For the converse, assume that $B\subseteq X$ is bounded; W.L.O.G, we may assume that $B$ is solid, otherwise, consider the solid hull of $B$ which is again bounded by \cite[Lemma 5]{Z2}. So, $B_{+}=\{x\in B, x\geq 0\}$ is also bounded. Assume that $(B_{+})^{\vee}$ is the set of all finite suprema of elements of $B_{+}$. By the $AM$-property, $(B_{+})^{\vee}$ is also bounded. In addition, $(B_{+})^{\vee}$ can be considered as an increasing net in $X_{+}$. So, by the Levi property, $\sup (B_{+})^{\vee}$ exists. But in this case, $\sup B_{+}$ also exists and $\sup (B_{+})^{\vee}=\sup B_{+}$. Put $y=\sup B_{+}$. Therefore, for each $x\in B_{+}$, $x\leq y$; now, it is clear from the relation $B\subseteq B_{+}-B_{+}$ that $B$ is also order bounded.

$(ii)\to (i)$. Suppose $B\subseteq X$ is bounded so that order bounded. Now, it is clear that $B^{\vee}$ is also order bounded and therefore bounded.

Suppose $D$ is an upward directed bounded set in $X_{+}$. So, it is order bounded. Now, $D$ has a supremum since $X$ is order complete.
\end{proof}
Observe that order completeness is essential in the assumptions of Theorem \ref{12} and can not be removed. Consider the ring  $X=C[0,1]$; it possesses $AM$-property. Also, boundedness and order boundedness notions agree in $X$ by \cite[Proposition 2.1]{MZ} and also using this fact that in $C(K)$-spaces, boundedness and order boundedness coincide. But it does not have the Levi property.
\begin{coro}\label{13}
 Suppose $X$ is a  locally solid $\ell$-ring and $Y$ is a locally solid $f$-ring that possesses $AM$ and Levi properties. Then, for a group homomorphism $T:X\to Y$, we have the following observations.
 \begin{itemize}
		\item[\em (i)] { If $T$ is $nr$-bounded, then $T$ is order bounded}.
		\item[\em (ii)] { If $T$ is $br$-bounded, then $T$ is order bounded}.
\item[\em (ii)] { If $T$ is continuous, then $T$ is order bounded}.
		\end{itemize}
\end{coro}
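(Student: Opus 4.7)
The plan is to reduce all three parts to a single observation: by Theorem \ref{12}, every ring-bounded subset of $Y$ is order bounded, since $Y$ is an order complete locally solid $f$-ring with the $AM$ and Levi properties. Hence, given an arbitrary order interval $[a,b]\subseteq X$, it suffices in each case to show that $T([a,b])$ is ring-bounded in $Y$; once that is in hand, $T$ sends the order interval into an order interval of $Y$, which is exactly order boundedness of $T$.

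Part (ii) is essentially one line: Lemma \ref{55} gives that $[a,b]$ is ring-bounded in $X$, and the definition of $br$-boundedness pushes ring-boundedness straight through $T$ to $T([a,b])$.

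Parts (i) and (iii) share a common opening. By Lemma \ref{100}, applied to the underlying $\ell$-group of $X$ under the standing assumption that singletons are bounded, the order interval $[a,b]$ is bounded in the topological-group sense, so for any zero neighborhood $U\subseteq X$ I can find $n\in\Bbb N$ with $[a,b]\subseteq nU$. In (i) I would take $U$ from the definition of $nr$-boundedness, so that $T(U)$ is ring-bounded in $Y$; a short bookkeeping step --- splitting an arbitrary zero neighborhood $W\subseteq Y$ as $W_{1}+\cdots+W_{1}\subseteq W$ ($n$ summands) via continuity of addition, then choosing $V$ with $V T(U)\subseteq W_{1}$ and $T(U)V\subseteq W_{1}$ --- promotes this to ring-boundedness of $nT(U)\supseteq T([a,b])$. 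In (iii), I would instead feed each zero neighborhood $V\subseteq Y$ through continuity of $T$ to obtain $U$ with $T(U)\subseteq V$, so that $T([a,b])\subseteq nV$; this shows $T([a,b])$ is bounded in the topological-group sense in $Y$, and Proposition \ref{101}(i), using that singletons in $Y$ are bounded, upgrades that conclusion to ring-boundedness in $Y$.

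The only mildly delicate point is the group-to-ring bookkeeping for $nT(U)$ in part (i): ring-boundedness in $Y$ is defined via two-sided multiplicative absorption rather than integer scaling, so the target neighborhood has to be split into a sum using continuity of addition before the hypothesis on $T(U)$ can be applied. Everything else is a direct appeal to Lemma \ref{55}, Lemma \ref{100}, Proposition \ref{101}, and Theorem \ref{12}.
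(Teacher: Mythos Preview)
Your proposal is correct and follows essentially the same route as the paper's proof: in each case one shows that the image of an order interval is ring-bounded in $Y$ and then invokes Theorem~\ref{12} to conclude order boundedness, using Lemma~\ref{55} for (ii) and the group-boundedness of order intervals (Lemma~\ref{100}) together with Proposition~\ref{101} for (i) and (iii). The only cosmetic differences are that you spell out explicitly the additive splitting needed to pass from ring-boundedness of $T(U)$ to that of $nT(U)$ in (i), and in (iii) you argue directly that continuity forces $T([a,b])$ to be group-bounded rather than citing the external reference \cite[Remark~2.4]{KZ} as the paper does.
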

\begin{proof}

$(i)$. Suppose $T$ is $nr$-bounded. So, there is a zero neighborhood $U\subseteq X$ such that $T(U)$ is bounded. Assume that $B\subseteq X$ is bounded in the sense of the underlying topological group. Thus, there exists a positive integer $n$ with $B\subseteq nU$ so that $T(B)\subseteq nT(U)$. This implies that $T(B)$ is bounded.  Now, suppose $A\subseteq X$ is order bounded so that bounded in the sense of the topological group. Using previous argument, we conclude that $T(A)$ is bounded in $X$. Thus, Theorem \ref{12} yields that $T(A)$ is order bounded, as claimed.

$(ii)$. Suppose $A\subseteq X$ is order bounded. Therefore, it is bounded by Lemma \ref{55}. By the assumption, $T(A)$ is also bounded in $Y$. Therefore, Theorem  \ref{12} results in order boundedness of $T(A)$.

$(iii)$. Now, suppose $T$ is continuous. By \cite[Remark 2.4]{KZ}, $T$ is $bb$-bounded in the sense that it maps bounded sets to bounded sets while we consider boundedness in the topological group setting. Now, suppose $A\subseteq X$ is order bounded so that bounded in the sense of the underlying topological group by Lemma \ref{100}. This results in boundedness of $T(A)$ in $Y$ ( again in the topological group sense). By Proposition \ref{101}, we conclude that $T(A)$ is bounded and by Theorem \ref{12}, order bounded, as we wanted.
\end{proof}

By considering Corollary \ref{13} and \cite[Lemma 4, Lemma 5, Lemma 6]{Z2}, we have the following observations.
\begin{coro}
Suppose $X$ is a locally solid $f$-ring  that possesses  $AM$, Fatou, and Levi properties. Then ${\sf Hom_{nr}(X)}$ is a lattice ring.
\end{coro}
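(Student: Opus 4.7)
The plan is to assemble the result by combining the implication ``$nr$-bounded $\Rightarrow$ order bounded'' from Corollary \ref{13} with the Riesz--Kantorovich type construction of lattice operations recorded in \cite[Lemmas 4, 5, 6]{Z2}. The target is to show two things: first that ${\sf Hom_{nr}}(X)$ sits inside the set of $nr$-bounded order bounded endomorphisms, where $|T|,T^{+},T^{-}$ make sense; and second, that these lattice operations land back in ${\sf Hom_{nr}}(X)$, so that the space becomes closed under $\vee,\wedge$ and hence forms a lattice ring.

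First I would take an arbitrary $T\in {\sf Hom_{nr}}(X)$ and, since $X$ itself plays the role of both source and target and satisfies the hypotheses of Corollary \ref{13}, conclude that $T$ is order bounded. Thus every element of ${\sf Hom_{nr}}(X)$ is already an $nr$-bounded order bounded endomorphism, which is exactly the class for which \cite[Lemmas 4, 5, 6]{Z2} construct $T^{+}$, $T^{-}$ and $|T|$ via the usual Riesz--Kantorovich suprema on $X_{+}$. The $AM$-property will be what makes these suprema well-defined on bounded pieces: for a zero neighborhood $U\subseteq X$ witnessing $nr$-boundedness of $T$, the set $T(U)$ is bounded, and the $AM$-property gives boundedness of $T(U)^{\vee}$, after which the Levi property produces the needed pointwise suprema, while Fatou ensures the resulting homomorphism is still controlled by a solid order closed neighborhood.

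Next I would verify that $|T|\in {\sf Hom_{nr}}(X)$, which is really the substantive step. Given $T\in {\sf Hom_{nr}}(X)$ with $T(U)$ bounded, the image $|T|(U)$ lies, by the Riesz--Kantorovich formula, inside the solid hull of $T(U)^{\vee}-T(U)^{\wedge}$, and by $AM$ together with solidness this remains bounded in $X$; the Fatou property guarantees that passage to solid order-closed neighborhoods is compatible with this estimate. Combined with the identities $T\vee S = \tfrac{1}{2}(T+S+|T-S|)$ and $T\wedge S = \tfrac{1}{2}(T+S-|T-S|)$ appearing in \cite{Z2} (interpreted in the additive group sense that suits our ring setting), one gets that ${\sf Hom_{nr}}(X)$ is stable under $\vee$ and $\wedge$. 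Since it is already a ring by \cite{MZ}, the lattice ring structure follows.

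The main obstacle, and the reason all three of $AM$, Fatou and Levi are needed simultaneously, is keeping the Riesz--Kantorovich suprema inside the smaller class ${\sf Hom_{nr}}(X)$ rather than just inside the larger class of order bounded homomorphisms: one needs Levi to guarantee the suprema exist, $AM$ to ensure that passing to finite suprema over a bounded set $T(U)$ does not escape the bounded sets, and Fatou to provide a solid, order-closed local base so the estimates transfer to zero neighborhoods in $X$. Once these three pieces are combined through Corollary \ref{13} and the $Z2$ lemmas, the conclusion is immediate.
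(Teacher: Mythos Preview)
Your overall strategy coincides with the paper's: the corollary is obtained simply by combining Corollary~\ref{13} (so that every $nr$-bounded endomorphism is automatically order bounded, hence ${\sf Hom_{nr}}(X)={\sf Hom^{b}_{nr}}(X)$) with \cite[Lemmas 4, 5, 6]{Z2}, which supply the Riesz--Kantorovich lattice operations on ${\sf Hom^{b}_{nr}}(X)$ under the Fatou hypothesis. Your more detailed explanation of why $|T|$ remains $nr$-bounded via the $AM$ and Levi properties is essentially a sketch of what those $Z2$ lemmas do, so no new route is being taken.

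There is, however, one genuine slip. The identities
\[
T\vee S=\tfrac{1}{2}\bigl(T+S+|T-S|\bigr),\qquad T\wedge S=\tfrac{1}{2}\bigl(T+S-|T-S|\bigr)
\]
are vector-lattice identities; they rely on the scalar $\tfrac{1}{2}$ and are \emph{not} available in an arbitrary $\ell$-ring or $\ell$-group, where division by $2$ need not make sense (think of $\zz$ or products thereof). So this step, as written, fails in the present setting. The fix is easy and purely algebraic: once you know $|T|\in{\sf Hom_{nr}}(X)$ you automatically have $T^{+}=\tfrac{1}{2}(|T|+T)$? No---again no halves. Use instead the $\ell$-group identities $T^{+}=T\vee 0$, $T\vee S=S+(T-S)^{+}$, and $T\wedge S=T+S-T\vee S$, or simply note that the $Z2$ lemmas build $T^{+}$, $T^{-}$, $|T|$ directly from the Riesz--Kantorovich formulae on $X_{+}$ and show they land back in ${\sf Hom^{b}_{nr}}(X)$; closure under $\vee,\wedge$ then follows from these group-theoretic relations without ever invoking $\tfrac{1}{2}$.
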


\begin{coro}
Suppose $X$ is a locally solid $f$-ring  that possesses  $AM$, Fatou, and Levi properties. Then ${\sf Hom_{br}(X)}$ is a lattice ring.
\end{coro}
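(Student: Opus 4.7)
The plan is to identify ${\sf Hom_{br}}(X)$ as a sublattice of the ordered group of order bounded group homomorphisms on $X$, and then to check that the Riesz-Kantorovich formulae from \cite{Z2} preserve $br$-boundedness under the standing assumptions of $AM$, Fatou, and Levi. The ring structure on ${\sf Hom_{br}}(X)$ is already established in \cite{MZ}, so only the compatible lattice structure needs to be produced.

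First I would invoke Corollary \ref{13}(ii) to conclude that every $br$-bounded homomorphism $T:X\to X$ is automatically order bounded. Hence ${\sf Hom_{br}}(X)$ sits group-theoretically inside the order bounded group homomorphisms on $X$. Applying \cite[Lemma 4, Lemma 5, Lemma 6]{Z2}, the Fatou and Levi hypotheses guarantee that this ambient object is a lattice ring, with the join given by a Riesz-Kantorovich-type formula
\[
(S\vee T)(x)=\sup\{S(y)+T(x-y):0\leq y\leq x\}\qquad (x\geq 0,\ S,T\geq 0),
\]
and with the modulus and meet defined symmetrically.

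The central step is then to show that if $S,T\in {\sf Hom_{br}}(X)$ the homomorphism $S\vee T$ again lies in ${\sf Hom_{br}}(X)$. Given a bounded set $B\subseteq X$, Lemma \ref{55} tells us that each order interval $[0,|x|]$ is bounded. Combining this with Theorem \ref{12}, which allows ``bounded'' and ``order bounded'' to be used interchangeably in $X$, one shows that the enlarged set $\bigcup_{x\in B}[0,|x|]$ is bounded; so its image under the $br$-bounded maps $S$ and $T$ is bounded in $X$. The $AM$ property then keeps finite suprema of elements from $S([0,|x|])+T([0,|x|])$ bounded, and the Levi property produces the sup in the Riesz-Kantorovich expression while keeping it in a bounded set. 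Hence $(S\vee T)(B)$ is bounded.

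The main obstacle I anticipate is precisely this last piece of bookkeeping: the supremum defining $(S\vee T)(x)$ must live in a single bounded subset of $X$ uniformly as $x$ varies over $B$, which is where all three hypotheses enter together. The $AM$ property keeps the net of finite suprema bounded, the Levi property produces the actual supremum, and the Fatou property ensures that boundedness is not lost when passing to the order-closure. Once this is achieved for the join, $S\wedge T=-((-S)\vee(-T))$ disposes of the meet, and the inequality $|S\cdot T|\leq |S|\cdot |T|$ is inherited from ${\sf Hom_{ob}}(X)$ via the $f$-ring property of $X$, completing the verification that ${\sf Hom_{br}}(X)$ is a lattice ring.
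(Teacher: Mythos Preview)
Your proposal is correct and uses the same two ingredients the paper does, namely Corollary~\ref{13} and \cite[Lemma 4, Lemma 5, Lemma 6]{Z2}; in the paper the corollary is literally stated as an immediate consequence of these, with no further argument.

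The difference is in how you organize the deduction. The paper's route is the short one: Corollary~\ref{13}(ii) gives ${\sf Hom_{br}}(X)={\sf Hom^{b}_{br}}(X)$ outright (every $br$-bounded homomorphism is order bounded, and the reverse inclusion is trivial), after which the relevant lemma from \cite{Z2} already asserts that ${\sf Hom^{b}_{br}}(X)$ is a lattice ring under the Fatou and Levi hypotheses. You instead embed ${\sf Hom_{br}}(X)$ into the larger ${\sf Hom^{b}}(X)$ and then reprove closure under $\vee$ by hand, invoking Theorem~\ref{12}, the $AM$-property, Levi, and Fatou directly. That argument works, but it duplicates what \cite{Z2} has already packaged; once you know $S$ and $T$ are order bounded, $S\vee T$ is order bounded too, and then Theorem~\ref{12} (bounded $\Leftrightarrow$ order bounded in $X$) immediately gives $br$-boundedness of $S\vee T$ without separately tracking finite suprema and their limits. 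So your ``central step'' is valid but unnecessary, and could in any case be shortened to one line.
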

\begin{coro}
Suppose $X$ is a locally solid $f$-ring  that possesses  $AM$, Fatou, and Levi properties. Then ${\sf Hom_{cr}(X)}$ is a lattice ring.
\end{coro}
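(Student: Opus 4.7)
The plan is to reduce this corollary to the already-established lattice ring structure of continuous order bounded group endomorphisms on $X$, by using Corollary \ref{13} to identify ${\sf Hom_{cr}}(X)$ with that class, and then invoking \cite[Lemma 6]{Z2} (the analogue of the lemmas already cited for ${\sf Hom_{nr}}$ and ${\sf Hom_{br}}$).

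First I would apply Corollary \ref{13}(iii) with $Y=X$: since $X$ itself is a locally solid $f$-ring with the $AM$ and Levi properties, every continuous group endomorphism $T\colon X\to X$ is automatically order bounded. Consequently,
\[{\sf Hom_{cr}}(X)\;=\;\{T\colon X\to X \text{ : } T \text{ is a continuous, order bounded group homomorphism}\}.\]
This is the step that licenses the use of the Riesz-Kantorovich formulae, which are only available on the order bounded side.

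Next, I would appeal to \cite[Lemma 6]{Z2}, which gives the required lattice operations: for $S,T\in{\sf Hom_{cr}}(X)$ one defines $(S\vee T)(x)$, $(S\wedge T)(x)$ and $|T|(x)$ on $X_+$ via the Riesz-Kantorovich formulae and extends additively. The Fatou property of $X$ is what ensures that the resulting homomorphisms are again continuous (so that the lattice operations stay inside ${\sf Hom_{cr}}(X)$), and the Levi property guarantees the pointwise suprema appearing in the formulae actually exist. Combined with composition of homomorphisms, this gives the ring and lattice structure simultaneously; the $f$-ring inequality on $X$, applied pointwise, yields the compatibility $|S\circ T|\le |S|\circ|T|$ and thus the lattice ring property.

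The main obstacle, if one wished to write this self-contained, is precisely the continuity of $S\vee T$: one must show that for every zero neighborhood $W\subseteq X$ there is a zero neighborhood $U\subseteq X$ with $(S\vee T)(U)\subseteq W$ in the sense required by ${\sf cr}$-convergence. The argument works by picking $W$ inside a solid order closed member of the Fatou base, choosing $U$ so that both $S(U)$ and $T(U)$ are absorbed by a smaller neighborhood with appropriate multiplicative behavior, and then exploiting that the Riesz-Kantorovich supremum over $[0,x]$ stays in the order closed solid hull of $S(U)\cup T(U)$. Once this is handled, extending additively to all of $X$ and verifying $|S\circ T|\le|S|\circ|T|$ coordinatewise are routine, completing the proof that ${\sf Hom_{cr}}(X)$ is a lattice ring.
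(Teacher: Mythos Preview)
Your proposal is correct and follows exactly the paper's intended argument: the paper introduces this corollary (together with its two companions for ${\sf Hom_{nr}}$ and ${\sf Hom_{br}}$) with the sentence ``By considering Corollary~\ref{13} and \cite[Lemma 4, Lemma 5, Lemma 6]{Z2}, we have the following observations,'' so the combination of Corollary~\ref{13}(iii) (continuous $\Rightarrow$ order bounded, since $Y=X$ has $AM$ and Levi) with \cite[Lemma~6]{Z2} (Fatou and Levi give the lattice ring structure on the continuous order bounded class) is precisely what is meant. Your additional sketch of how one would verify continuity of $S\vee T$ directly is extra detail beyond what the paper provides, but it is consistent with the cited lemma.
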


\begin{prop}\label{14}
Suppose $X$ is a locally solid $f$-ring that possesses $AM$ and Levi property and $Y$ is any locally solid $\ell$-ring. Then, every order bounded group homomorphism $T:X\to Y$  is $br$-bounded.
\end{prop}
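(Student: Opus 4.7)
The plan is to chain together the three key facts already established in the paper. Start with an arbitrary bounded set $B\subseteq X$. Because $X$ is an order complete locally solid $f$-ring with the $AM$ and Levi properties, Theorem \ref{12} applies and tells us that boundedness and order boundedness coincide in $X$; in particular $B$ is order bounded, say $B\subseteq[u,v]$ for some $u,v\in X$.

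Next I would invoke the hypothesis that $T$ is order bounded as a group homomorphism: it sends the order interval $[u,v]$ into an order bounded subset of $Y$, and hence $T(B)$ is order bounded in $Y$. Finally, since $Y$ is a locally solid $\ell$-ring (and we are operating under the standing convention that singletons in the underlying topological group are bounded, so that Lemma \ref{55} applies), every order bounded subset of $Y$ is bounded in the ring-theoretic sense. Therefore $T(B)$ is bounded in $Y$, which is exactly the definition of $T$ being $br$-bounded.

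There is no real obstacle here — the proposition is essentially a bookkeeping corollary of Theorem \ref{12} and Lemma \ref{55}, and the mild care one must take is only that Theorem \ref{12} requires order completeness (implicit in the $AM$+Levi setup for an $f$-ring as used earlier), and that Lemma \ref{55} applies to $Y$ with no extra assumptions beyond being a locally solid $\ell$-ring. Once these two are cited, the three-step passage \emph{bounded in $X$} $\Rightarrow$ \emph{order bounded in $X$} $\Rightarrow$ \emph{order bounded in $Y$} $\Rightarrow$ \emph{bounded in $Y$} closes the argument.
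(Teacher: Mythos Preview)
Your proposal is correct and follows exactly the paper's own argument: the paper's proof is the one-line chain ``$B$ bounded $\Rightarrow$ $B$ order bounded (Theorem~\ref{12}) $\Rightarrow$ $T(B)$ order bounded $\Rightarrow$ $T(B)$ bounded (Lemma~\ref{55}).'' Your only extra remark concerns the order-completeness hypothesis of Theorem~\ref{12}, but note that the direction $(i)\Rightarrow(ii)$ you actually need uses only the $AM$ and Levi properties, so the citation is unproblematic.
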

\begin{proof}
Suppose $B\subseteq X$ is bounded. By Theorem \ref{12}, $B$ is also order bounded. By the assumption, $T(B)$ is order bounded so that bounded by Lemma \ref{55}.
\end{proof}
\begin{rem}
  We can not expect Proposition \ref{14} for either $nr$-bounded group homomorphisms or continuous group homomorphisms. Consider the identity group homomorphism on ${\Bbb R}^{\Bbb N}$. It is order bounded but not an $nr$-bounded group homomorphism by \cite[Example 2.1]{MZ}; observe that ${\Bbb R}^{\Bbb N}$ has $AM$ and Levi properties by Proposition \ref{000} and Proposition \ref{111}.

  Furthermore, suppose $X$ is the additive group $\ell_{\infty}$ with the absolute weak topology, pointwise ordering, and coordinate-wise multiplication and $Y$ is $\ell_{\infty}$ with the coordinate-wise multiplication, pointwise ordering, and the uniform norm topology. Then, the identity group homomorphism $I$ from $X$ to $Y$ is order bounded but not continuous. Observe that $X$ possesses the Levi and $AM$ properties.

\end{rem}

Before, we proceed with another application of $AM$-property, we have the following useful observation. Recall that ${\sf Hom^{b}}(X,Y)$ is the ring of all order bounded group homomorphisms from an $\ell$-ring $X$ into an $\ell$-ring $Y$.
\begin{lem}\label{20}
Suppose $X$ is a locally solid $f$-ring and $Y$ is a locally solid $f$-ring that possesses the Fatou property and is order complete. Then we have the following.
\begin{itemize}
\item[\em (i)] {${\sf Hom^{b}_{nr}}(X,Y)$ is an ideal of ${\sf Hom^{b}}(X,Y)$}.
\item[\em (ii)]{${\sf Hom^{b}_{br}}(X,Y)$ is an ideal of ${\sf Hom^{b}}(X,Y)$}.
\item[\em (iii)] {${\sf Hom^{b}_{cr}}(X,Y)$ is an ideal of ${\sf Hom^{b}}(X,Y)$}.
\end{itemize}
\end{lem}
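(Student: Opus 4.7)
The plan is to prove all three items in parallel. Each splits into (a) closure under the ring operations, which is already extracted from the lattice-ring structures obtained in \cite{MZ,Z2}, and (b) the solidity condition: whenever $|S|\le|T|$ in ${\sf Hom^{b}}(X,Y)$ and $T$ lies in the class, so does $S$. Only (b) requires genuine work and is the focus of the plan.

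The universal tool for (b) is the Riesz--Kantorovich representation $|T|(u)=\sup\{|T(y)|:|y|\le u\}$ for $u\ge 0$, available because $Y$ is order complete. It yields the pointwise estimate
\[
|S(x)|\ \le\ |S|(|x|)\ \le\ |T|(|x|),\qquad x\in X.
\]
Two structural features of $Y$ carry the rest of the argument: the $f$-ring identity $|ab|=|a||b|$ (and the attendant distributivity of positive multiplication over existing suprema) and the Fatou property (which supplies a local basis at $0$ consisting of solid order closed neighborhoods).

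For (i), fix a solid zero neighborhood $U\subseteq X$ with $T(U)$ bounded and let $B$ denote the solid hull of $T(U)$, which is bounded because $Y$ is locally solid. Given a solid order closed zero neighborhood $W\subseteq Y$, choose a solid $V\subseteq Y$ with $V\cdot B\subseteq W$ and $B\cdot V\subseteq W$. For $v\in V$ and $x\in U$ one has $|x|\in U$ and
\[
|v\cdot S(x)|\ \le\ |v|\cdot|T|(|x|)\ =\ \sup\{|v\cdot T(y)|:|y|\le|x|\}
\]
by Riesz--Kantorovich together with the $f$-ring property, while solidity of $U$ places every admissible $y$ in $U$ and hence puts $v\cdot T(y)$ in $W$. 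An order-closure argument based on the Fatou property pushes the supremum into $W$, and solidity of $W$ then gives $v\cdot S(x)\in W$; a symmetric computation handles $S(x)\cdot v$. Case (ii) is identical once $U$ is replaced by a generic bounded set in $X$ and $B$ by the solid hull of its image; case (iii) replaces the bounded-set step by continuity of $T$ to select a single $U$ with $T(U)\subseteq W$ and reruns the same chain of inequalities.

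The crux---and the step I expect to require the most care---is the order-closure passage from ``every admissible $v\cdot T(y)$ lies in $W$'' to ``$|v|\cdot|T|(|x|)$ lies in $W$''. The set of finite joins $|v\cdot T(y_{1})|\vee\cdots\vee|v\cdot T(y_{n})|$ is upward directed with supremum $|v|\cdot|T|(|x|)$, so if the joins themselves all lie in $W$ one concludes by order closedness. To secure the joins I would refine $W$ through solid order closed subneighborhoods $W_{0}$ satisfying $W_{0}+W_{0}\subseteq W$ and exploit $a\vee b\le a+b$ on the positive cone, leveraging the Fatou property to keep such refinements available at every stage of the argument.
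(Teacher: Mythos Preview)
Your plan coincides with the paper's: with the roles of $S$ and $T$ interchanged, the paper also reduces each part to the pointwise estimate $|T(x)|\le|T|(|x|)\le|S|(|x|)$ via the Riesz--Kantorovich formula $|S|(x)=\sup\{|S(u)|:|u|\le x\}$, and then appeals to solidity of the domain neighborhood together with order-closedness of the target neighborhood (Fatou) to absorb the supremum. So at the level of strategy there is no difference, and you have correctly isolated the only non-routine step.

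The mechanism you propose for that step, however, does not close. The bound $a_{1}\vee\cdots\vee a_{n}\le a_{1}+\cdots+a_{n}$ needs an $n$-fold sum, and a single refinement $W_{0}+W_{0}\subseteq W$ controls only two summands; iterating the refinement forces the auxiliary $V$ to depend on $n$, which is illegitimate since $n$ ranges over all of $\Bbb N$. Nor can one sidestep the finite-join issue by directedness: the set $\{|T(y)|:|y|\le|x|\}$ is not upward directed in general (take $T(a,b)=(a-b,a+b)$ on $\Bbb R^{2}$ with $|x|=(1,1)$, where $(2,0)$ and $(0,2)$ both occur but nothing dominates both), and a solid order-closed zero neighborhood need not be closed under finite suprema (the $\ell_{1}$-unit ball in $\Bbb R^{2}$ already fails). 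The paper itself does not spell this passage out; it simply asserts the conclusion from order-closedness. A clean way around the difficulty is to invoke \cite[Lemmas~4--6]{Z2} first: those lemmas give that each of the three classes is a sublattice of ${\sf Hom^{b}}(X,Y)$, so $|T|$ already lies in the relevant class, and then the pointwise bound $|S(x)|\le|T|(|x|)$ places $S(U)$ inside the solid hull of $|T|(U)$, finishing the argument without ever having to push a non-directed supremum through an order-closed set.
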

\begin{proof}
$(i)$. Assume $|T|\leq |S|$ where $T$ is order bounded and $S\in {\sf Hom^{b}_{nr}}(X,Y)$. There exists a zero neighborhood $U\subseteq X$ such that $S(U)$ is bounded. So, for each zero neighborhood $W\subseteq Y$, there is a zero neighborhood $V\subseteq Y$ with $VS(U)\subseteq  W$. Since $U$ is solid, for any $y\in U$, $y^{+}, y^{-}, |y|\in U$. Fix any $x\in U_{+}$. Then $|T|(x)\leq |S|(x)$. In addition, by \cite[Theorem 1]{Z2}, $|S|(x)=\sup\{|S(u)|:|u|\leq x\}$. Since $U$ is solid and $V$ is order closed, we conclude that $V|S|(x)\subseteq W$ so that $V|T|(x)\subseteq W$. Since $|T(x)|\leq |T|(x)$, we see that $V|T(x)|\subseteq W$. So, $VT(x)\subseteq W$. Therefore, $VT(U_{+})\subseteq W$. Since $U\subseteq U_{+}-U_{+}$, we conclude that $T(U)$ is also bounded.

$(ii)$. It is similar to the proof of $(i)$. Just, observe that for a bounded set $B\subseteq X$, W.L.O.G, we may assume that $B$ is solid; otherwise, consider the solid hull of $B$ which is also bounded by \cite[Lemma 5]{Z2}.

$(iii)$. Assume $|T|\leq |S|$ where $T$ is order bounded and $S\in {\sf Hom^{b}_{cr}}(X,Y)$. Choose arbitrary zero neighborhood $W\subseteq Y$. There is a zero neighborhood $V$ with $V-V\subseteq W$. Find any neighborhood $U$ such that $S(U)\subseteq V$. Fix any $x\in U_{+}$. Then, $|T|(x)\leq |S|(x)$. In addition, by \cite[Theorem 1]{Z2}, $|S|(x)=\sup\{|S(u)|:|u|\leq x\}$. Since $U$ is solid and also $V$ and $W$ are order closed, we conclude that $|S|(x)\in V$ so that $|T|(x)\in V$. Since $|T(x)|\leq |T|(x)$, we see that $|T(x)|\in V$. So, $T(x)\in V$. Therefore, $T(U_{+})\subseteq  V$. Since $U\subseteq U_{+}-U_{+}$, we conclude that $T(U)\subseteq T(U_{+})-T(U_{+})\subseteq V-V\subseteq W$, as desired.
\end{proof}
As a consequence, we state a domination property for each class of bounded order bounded group homomorphisms.
\begin{coro}
Suppose $X$ is a locally solid $f$-ring and $Y$ is a locally solid $f$-ring that possesses the Fatou property and is order complete. Moreover, assume that $T,S:X\to Y$ are group homomorphisms such that $0\leq T\leq S$. Then we have the following.
\begin{itemize}
\item[\em (i)] {If $S\in {\sf Hom^{b}_{nr}}(X,Y)$ then  $T\in {\sf Hom^{b}}(X,Y)$}.
\item[\em (ii)]{If $S\in {\sf Hom^{b}_{br}}(X,Y)$ then  $T\in {\sf Hom^{b}}(X,Y)$}.
\item[\em (iii)] {If $S\in {\sf Hom^{b}_{cr}}(X,Y)$ then  $T\in {\sf Hom^{b}}(X,Y)$}.
\end{itemize}
\end{coro}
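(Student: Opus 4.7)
The plan is to deduce this corollary directly from Lemma~\ref{20}, which is precisely the domination (``ideal'') statement packaged at the homomorphism level. Since $0\leq T\leq S$ and both maps are positive, one has $|T|=T$ and $|S|=S$, so $|T|\leq |S|$. Lemma~\ref{20} says that each of ${\sf Hom}^{b}_{nr}(X,Y)$, ${\sf Hom}^{b}_{br}(X,Y)$, ${\sf Hom}^{b}_{cr}(X,Y)$ is a (solid) ideal of ${\sf Hom}^{b}(X,Y)$. Solidity gives, for any $T$ lying in ${\sf Hom}^{b}(X,Y)$ and dominated in absolute value by a member of the relevant subring, membership in that same subring, and in particular in ${\sf Hom}^{b}(X,Y)$.

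The only thing that needs a separate (and very short) verification is that $T$ itself is order bounded, so that the ideal hypothesis of Lemma~\ref{20} applies. For this I would use the elementary observation that a positive group homomorphism is order preserving: if $u\leq w\leq v$ in $X$, then $v-w\geq 0$ and $w-u\geq 0$, so additivity of $T$ together with $T\geq 0$ yields $T(u)\leq T(w)\leq T(v)$. Consequently $T([u,v])\subseteq [T(u),T(v)]$, which is an order interval in $Y$, and hence $T\in {\sf Hom}^{b}(X,Y)$.

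With $T\in {\sf Hom}^{b}(X,Y)$ and $|T|\leq |S|$ in hand, parts (i), (ii), (iii) follow by invoking Lemma~\ref{20}(i), (ii), (iii) respectively. The hypotheses on $Y$ (locally solid $f$-ring with the Fatou property and order completeness) are used only to guarantee the Riesz--Kantorovich-type formula $|S|(x)=\sup\{|S(u)|:|u|\leq x\}$ employed in Lemma~\ref{20}, and they are already absorbed into that lemma.

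I do not expect any genuine obstacle: the only non-trivial input, namely the solidity of the three subclasses within ${\sf Hom}^{b}(X,Y)$, is precisely Lemma~\ref{20}, and the preliminary step showing $T$ is order bounded is a one-line consequence of $T\geq 0$ and $T$ being a group homomorphism.
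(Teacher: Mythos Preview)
Your proposal is correct and matches the paper's approach: the corollary is stated immediately after Lemma~\ref{20} with no separate proof, as a direct consequence of that lemma. Your preliminary ``elementary observation'' that a positive group homomorphism is automatically order bounded is exactly what is needed to place $T$ in ${\sf Hom}^{b}(X,Y)$ so that Lemma~\ref{20} applies (and, as you may have noticed, that observation alone already yields the literal conclusion $T\in{\sf Hom}^{b}(X,Y)$, while Lemma~\ref{20} gives the stronger $T\in{\sf Hom}^{b}_{*}(X,Y)$).
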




\begin{thm}\label{30}
Suppose $X$ is an order complete locally solid $f$-ring with unity and the Fatou property. Then ${\sf Hom^{b}_{br}(X)}$ has the Levi property if and only if so is $X$.
\end{thm}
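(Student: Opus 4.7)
The plan is to establish the two implications separately, with the forward direction carrying the main technical step.

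For the forward direction, suppose $X$ has the Levi property and take a $\tau$-bounded upward directed net $(T_\alpha)$ in ${\sf Hom^{b}_{br}(X)}_+$. For each $x \in X_+$ the singleton $\{x\}$ is bounded in $X$, so the $br$-boundedness of the net forces the upward directed family $\{T_\alpha(x):\alpha\}$ to be bounded in $X$. Levi on $X$ then furnishes $T(x):=\sup_\alpha T_\alpha(x)\in X_+$, and I extend to all of $X$ via $T(x)=T(x^+)-T(x^-)$. Order completeness of $X$ allows $\sup$ to distribute across sums of increasing nets, so $T$ is additive and hence a group homomorphism; since $T\geq 0$ and $T([0,u])\subseteq [0,T(u)]$, it is order bounded.

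The decisive step is verifying that $T$ is $br$-bounded. Given a bounded $B\subseteq X$, taken solid without loss of generality by Lemma~5 of \cite{Z2}, and any Fatou zero-neighborhood $W\subseteq X$, the $\tau$-boundedness of $(T_\alpha)$ in ${\sf Hom^{b}_{br}(X)}$ delivers an $n\in \Bbb N$ with $T_\alpha(B)\subseteq nW$ for every $\alpha$. I would then exploit the Fatou property together with the $\ell$-group Riesz decomposition to argue that $nW$ remains solid and order closed; since $T_\alpha(x)\uparrow T(x)$ for each $x\in B_+$ with every $T_\alpha(x)\in nW$, order closedness yields $T(x)\in nW$, and solidness propagates this to $T(B)\subseteq nW$. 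As Fatou neighborhoods form a base at zero, $T(B)$ is bounded. Thus $T\in {\sf Hom^{b}_{br}(X)}_+$, and $T=\sup_\alpha T_\alpha$ then follows at once in the lattice ordering.

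For the reverse direction, suppose ${\sf Hom^{b}_{br}(X)}$ has the Levi property and let $(x_\alpha)\subseteq X_+$ be a $\tau$-bounded upward directed net. I would embed $X$ into ${\sf Hom^{b}_{br}(X)}$ through the multiplication operators $M_x\colon y\mapsto xy$: each $M_x$ is $br$-bounded (since the product of two bounded sets in a topological ring is bounded), and it is order bounded by the $f$-ring axioms; the unity of $X$ renders $x\mapsto M_x$ positive, order-preserving, and injective (as $M_x(1)=x$). The net $(M_{x_\alpha})$ is upward directed and $\tau$-bounded in ${\sf Hom^{b}_{br}(X)}$, because for any bounded $B\subseteq X$ the set $\bigcup_\alpha M_{x_\alpha}(B)=\{x_\alpha\}\cdot B$ is bounded. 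Levi on ${\sf Hom^{b}_{br}(X)}$ supplies $S:=\sup_\alpha M_{x_\alpha}$, and evaluating at the unity identifies $S(1)=\sup_\alpha x_\alpha$: clearly $S(1)\geq x_\alpha$, and any upper bound $z$ of $(x_\alpha)$ satisfies $M_z\geq M_{x_\alpha}$ so that $M_z\geq S$, yielding $z=M_z(1)\geq S(1)$.

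The main obstacle is the forward direction's claim that $nW$ stays order closed whenever $W$ is a Fatou neighborhood; this is where the Fatou property, order completeness, and the $f$-ring structure of $X$ all enter essentially, and it is the only place in the argument demanding nontrivial lattice-theoretic manipulation.
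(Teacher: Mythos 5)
Your overall architecture coincides with the paper's: one direction by forming pointwise suprema $T(x)=\sup_\alpha T_\alpha(x)$ on $X_+$ and extending additively, the other by embedding $X$ into ${\sf Hom^{b}_{br}(X)}$ via the multiplication operators $M_{x_\alpha}$ and evaluating the supremum at the unity (your verification that $S(\mathsf{1})=\sup_\alpha x_\alpha$ through upper bounds $M_z\geq S$ is in fact more careful than the paper's). The problem is the step you yourself flag as the main obstacle: it is a genuine gap, and moreover it sits on top of a second, unacknowledged one. First, you translate $\tau$-boundedness of $(T_\alpha)$ in ${\sf Hom^{b}_{br}(X)}$ into the additive form $T_\alpha(B)\subseteq nW$. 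Boundedness in a topological ring is the multiplicative condition ($\mathcal{V}\cdot\{T_\alpha\}\subseteq\mathcal{W}$), and passing to the additive form requires Proposition~\ref{101}(ii), whose hypothesis (bounded singletons in the underlying topological group) is not automatic for the homomorphism ring: without scalar multiplication there is no way to write $T=S_1+\cdots+S_n$ with each $S_i(B)\subseteq W$, so $\{T\}\subseteq n\,\{S:S(B)\subseteq W\}$ may simply fail. What does follow directly from ring-boundedness is the statement the paper uses: $\bigcup_\alpha T_\alpha(B)$ is ring-bounded in $X$, i.e.\ for each zero neighborhood $W$ there is $V$ with $VT_\alpha(B)\subseteq W$ for all $\alpha$.

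Second, even granting $T_\alpha(B)\subseteq nW$, your argument needs $nW=W+\cdots+W$ to be order closed. Order closedness is not preserved by sums in general (the summands of a decomposition $y_\gamma=w_1^\gamma+\cdots+w_n^\gamma$ need not order-converge along with $y_\gamma$), and the assertion that it survives for solid order-closed sets is a nontrivial Fremlin-type theorem requiring its own proof using Riesz decomposition and order completeness; you assert it rather than prove it. The paper's route avoids the issue entirely: from $VT_\alpha(B)\subseteq W$ with $W$ solid and order closed, and from $T_\alpha(x)\uparrow T(x)$ for $x\in B_+$, one gets $vT_\alpha(x)\uparrow vT(x)$ for $v\in V_+$ by the order continuity of multiplication by a fixed positive element in an Archimedean $f$-ring (\cite[Theorem 3.15]{J}), whence $vT(x)\in W$ by order closedness of $W$ itself --- no sums of neighborhoods appear. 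To repair your proof, replace the additive estimate $T_\alpha(B)\subseteq nW$ by the multiplicative one $VT_\alpha(B)\subseteq W$ and invoke order continuity of the ring multiplication; as written, the forward direction is not complete.
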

\begin{proof}
Suppose $(T_{\alpha})$ is a bounded increasing net in ${\sf Hom^{b}_{br}(X)}_{+}$. Therefore, for every  bounded set $B\subseteq X$, it follows that $(T_{\alpha}(B))$ is uniformly bounded for each $\alpha$. Thus, for each $x\in X_{+}$, the net $(T_{\alpha}(x))$ is bounded and increasing in $X$ so that it has a supremum, namely, $\alpha_x$. Define $T_{\alpha}:X_{+}\to X_{+}$ via $T_{\alpha}(x)=\alpha_x$. It is an additive map; it is easy to see that $\alpha_{x+y}\leq \alpha_x+\alpha_y$. For the converse, fix any $\alpha_0$. For each $\alpha\geq\alpha_0$, we have $T_{\alpha}(x)\leq \alpha_{x+y}-T_{\alpha}(y)\leq \alpha_{x+y}-T_{\alpha_0}(y)$ so that $\alpha_x\leq \alpha_{x+y}-T_{\alpha_0}(y)$. Since $\alpha_0$ was arbitrary, we conclude that $\alpha_{x}+\alpha_{y}\leq\alpha_{x+y}$. By \cite[Lemma 1]{Z2}, it extends to a positive group homomorphism $T:X\to X$. We need to show that $T\in {\sf Hom^{b}_{br}(X)}$. It is clear that $T$ is order bounded. Suppose $W$ is an arbitrary zero neighborhood in $X$. There is a zero neighborhood $V$ with $VT_{\alpha}(B)\subseteq W$. This means that $VT(B)\subseteq W$ since $W$ has the Fatou property and also using \cite[Theorem 3.15]{J}.

For the converse, assume that $(x_{\alpha})$ is a bounded increasing net in $X_{+}$. Define $T_{\alpha}:X\to X$ with $T_{\alpha}(x)=xx_{\alpha}$. It is easy to see that each $T_{\alpha}$ is $br$-bounded as well as order bounded. Fix a bounded set $B\subseteq X$. Suppose $W\subseteq X$ is an arbitrary zero neighborhood. Since the net $(x_{\alpha})$ is bounded, there exists a zero neighborhood $V\subseteq X$ such that $V(Bx_{\alpha})\subseteq W$ for each $\alpha$. It follows that $(T_{\alpha}(B))$ is bounded and increasing in ${\sf Hom^{b}_{br}(X)}$. Thus, by the assumption, $T_{\alpha}\uparrow T$ for some $T\in {\sf Hom^{b}_{br}(X)}$. Therefore, $T_{\alpha}({\sf 1})\uparrow T({\sf 1})$; that is $x_{\alpha}\uparrow T({\sf 1})$, as claimed.
\end{proof}

\begin{lem}\label{444}
Suppose $X$ is a locally bounded order complete locally solid $f$-ring with unity and the Fatou property. Then ${\sf Hom^{b}_{nr}(X)}={\sf Hom^{b}_{br}(X)}$.

\end{lem}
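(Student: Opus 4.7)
The idea is to prove the two inclusions separately. One direction uses local boundedness in an essentially trivial way; the other uses the unity of $X$ together with Proposition \ref{101} to transfer ring-boundedness to group-boundedness.

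For the inclusion ${\sf Hom^{b}_{br}(X)}\subseteq {\sf Hom^{b}_{nr}(X)}$, I would invoke local boundedness to fix a bounded zero neighborhood $U\subseteq X$. If $T\in{\sf Hom^{b}_{br}(X)}$, then by definition $T(U)$ is bounded in $X$, which is precisely the defining property of $nr$-boundedness. Since both classes consist of order bounded homomorphisms, no further work is needed for this direction.

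For the reverse inclusion ${\sf Hom^{b}_{nr}(X)}\subseteq {\sf Hom^{b}_{br}(X)}$, suppose $T$ is $nr$-bounded and pick a zero neighborhood $U\subseteq X$ with $T(U)$ bounded. Given an arbitrary bounded set $B\subseteq X$, I would first apply Proposition \ref{101}(ii) — here the unity of $X$ is essential — to conclude that $B$ is bounded in the sense of the underlying topological group, so that $B\subseteq nU$ for some positive integer $n$. Since $T$ is a group homomorphism, $T(B)\subseteq nT(U)=T(U)+\cdots+T(U)$. It then suffices to note that a finite sum of ring-bounded sets is ring-bounded: given a zero neighborhood $W\subseteq X$, pick $W'$ with the $n$-fold sum $W'+\cdots+W'\subseteq W$ and then a single $V$ with $VT(U)\subseteq W'$, whence $VnT(U)\subseteq W$. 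This shows $T(B)$ is bounded, so $T$ is $br$-bounded.

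The only genuine content is the passage from ring-boundedness of $B$ to group-boundedness, which is exactly Proposition \ref{101}(ii); this is why the unity hypothesis is needed. The remaining assumptions (order completeness, Fatou property, $f$-ring structure) play no role in the equality itself, and I would expect them to be present only to ensure that ${\sf Hom^{b}_{nr}(X)}$ and ${\sf Hom^{b}_{br}(X)}$ carry the lattice-ring structure studied earlier. Consequently I anticipate no significant obstacle in this argument — it is essentially a direct combination of the definition of local boundedness with Proposition \ref{101}.
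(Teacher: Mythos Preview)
Your proposal is correct and matches the paper's proof essentially step for step: both directions are handled exactly as you describe, with local boundedness giving ${\sf Hom^{b}_{br}}\subseteq{\sf Hom^{b}_{nr}}$ trivially and Proposition~\ref{101}(ii) (via unity) giving the reverse inclusion through $B\subseteq nU$. Your observation that order completeness, the Fatou property, and the $f$-ring structure are not used in the equality itself is also accurate; the paper does not comment on this but simply carries the standing hypotheses forward.
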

\begin{proof}
Assume that $X$ is locally bounded and a group homomorphisms $T$ on $X$ is $nr$-bounded. So, there exists a zero neighborhood $U\subseteq X$ such that $T(U)$ is bounded in $X$. Suppose $B\subseteq X$ is bounded. By, Proposition \ref{101}, it is bounded also in the sense of the underlying topological group. Find positive integer $n$ with $B\subseteq nU$ so that $T(B)\subseteq n T(U)$. This means that $T$ is $br$-bounded. Furthermore, by the assumption, every $br$-bounded group homomorphism is also $nr$-bounded, as claimed.
\end{proof}
Compatible with Lemma \ref{444} and Theorem \ref{30}, we have the following.
\begin{coro}
Suppose $X$ is a locally bounded order complete locally solid $f$-ring with  unity and the Fatou property. Then ${\sf Hom^{b}_{nr}(X)}$
 has the Levi property if and only if so is $X$.
\end{coro}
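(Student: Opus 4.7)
The plan is to reduce the claim directly to Theorem~\ref{30}. Under the standing hypotheses, Lemma~\ref{444} already tells us that ${\sf Hom^{b}_{nr}(X)}$ and ${\sf Hom^{b}_{br}(X)}$ consist of exactly the same group homomorphisms, so the only substantive check is that the two natural topologies on this common set coincide, after which the Levi property in one presentation is literally the Levi property in the other.

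To verify the topological comparison, I would fix a bounded zero neighborhood $U_0\subseteq X$, whose existence is guaranteed by local boundedness. Suppose a net $(T_\alpha)$ converges to $T$ in the $nr$-topology, witnessed by some zero neighborhood $U$ of $X$. Given an arbitrary bounded set $B\subseteq X$, Proposition~\ref{101}, combined with the unity of $X$, produces an integer $n$ with $B\subseteq nU$ in the underlying topological-group sense. For any zero neighborhood $W\subseteq X$ one picks $V\subseteq X$ with the $n$-fold sum $V+\cdots+V\subseteq W$, and then additivity of the group homomorphism $T_\alpha-T$ transports the eventual inclusion $(T_\alpha-T)(U)\subseteq V$ to $(T_\alpha-T)(B)\subseteq W$. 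Conversely, $br$-convergence specialises at once to uniform convergence on the single bounded neighborhood $U_0$, and hence to $nr$-convergence with the choice $U=U_0$. Thus the identity map is a homeomorphism between the two topological rings.

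With the topologies identified, the notion of a bounded, upward directed net in the positive cone is intrinsic to the common object, as is the question of whether each such net admits a supremum in the common underlying lattice. Theorem~\ref{30} applied to ${\sf Hom^{b}_{br}(X)}$ therefore yields the desired equivalence between the Levi property of ${\sf Hom^{b}_{nr}(X)}$ and that of $X$. The only mildly delicate point is the topological identification, and its crux is the use of the unity to pass, via Proposition~\ref{101}, between ring- and group-boundedness so that a single bounded zero neighborhood $U_0$ absorbs every bounded subset of $X$ in finitely many additive steps.
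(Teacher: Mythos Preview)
Your proposal is correct and follows exactly the paper's route: the paper derives the corollary simply by invoking Lemma~\ref{444} together with Theorem~\ref{30} and offers no further argument. Your explicit verification that the $nr$- and $br$-topologies on the common underlying set coincide is a welcome addition that the paper leaves implicit, and it is genuinely needed since the Levi property refers to the topology through its notion of boundedness.
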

\begin{thm}
Suppose $X$ is an order complete locally solid $f$-ring with  unity and the Fatou property. Then ${\sf Hom^{b}_{cr}(X)}$ has the Levi property if and only if so is $X$.
\end{thm}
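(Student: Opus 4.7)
The plan is to mirror the argument of Theorem~\ref{30} step by step, replacing the $br$-topology with the $cr$-topology throughout; the skeleton---pointwise supremum via the Levi property in $X$, additive extension by \cite[Lemma~1]{Z2}, and Fatou-driven passage of the limit through order-closed zero neighborhoods---is identical to what appears there.

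For the sufficiency, suppose $X$ has the Levi property and take a bounded increasing net $(T_{\alpha})$ in ${\sf Hom^{b}_{cr}(X)}_{+}$. Since singletons in $X$ are bounded, the $cr$-boundedness of $(T_{\alpha})$ forces $(T_{\alpha}(x))$ to be bounded and increasing in $X$ for each fixed $x\in X_{+}$, so it admits a supremum $\alpha_{x}$ by the Levi property of $X$. Setting $T(x)=\alpha_{x}$ gives an additive map on $X_{+}$ by the same two-sided inequality employed in Theorem~\ref{30}, and \cite[Lemma~1]{Z2} extends it to a positive group homomorphism $T:X\to X$. Positivity of $T$, together with the unity of $X$, yields order boundedness immediately.

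It remains to verify that $T$ is continuous. Fix a zero neighborhood $W\subseteq X$, which by the Fatou property we may assume to be solid and order closed. The $cr$-boundedness of $(T_{\alpha})$ supplies a neighborhood $U\subseteq X$ and a zero neighborhood $V\subseteq X$ such that $VT_{\alpha}(U)\subseteq W$ for every $\alpha$. Because $T_{\alpha}(x)\uparrow T(x)$ pointwise and $W$ is order closed, one may pass to the supremum---using \cite[Theorem~3.15]{J} exactly as in the final line of Theorem~\ref{30}---to obtain $VT(U)\subseteq W$, which gives $T\in{\sf Hom^{b}_{cr}(X)}$.

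For the necessity, given an increasing bounded net $(x_{\alpha})$ in $X_{+}$, set $T_{\alpha}(x)=xx_{\alpha}$. Each $T_{\alpha}$ is order bounded and continuous, hence belongs to ${\sf Hom^{b}_{cr}(X)}_{+}$, and the boundedness of $(x_{\alpha})$ in $X$ translates directly into $cr$-boundedness of $(T_{\alpha})$. The assumed Levi property of ${\sf Hom^{b}_{cr}(X)}$ therefore produces $T_{\alpha}\uparrow T$ for some $T\in{\sf Hom^{b}_{cr}(X)}$, and evaluating at the unity gives $x_{\alpha}=T_{\alpha}(\mathbf{1})\uparrow T(\mathbf{1})$. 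The main obstacle is the continuity verification in the sufficiency direction: the $cr$-topology contains the extra existential clause ``for every zero neighborhood $V$ there is $\alpha_{0}$'' which is absent in the $br$-setting, so passing the supremum through the inclusion $VT_{\alpha}(U)\subseteq W$ must genuinely exploit the order-closedness afforded by Fatou, and this is where essentially all of the work concentrates.
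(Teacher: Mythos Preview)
Your overall strategy matches the paper's, but the continuity step in the sufficiency direction contains a genuine gap. From $cr$-boundedness of $(T_{\alpha})$ you extract a neighborhood $U$ and a zero neighborhood $V$ with $VT_{\alpha}(U)\subseteq W$ for all $\alpha$, and after passing to the supremum you conclude $VT(U)\subseteq W$. But this inclusion does \emph{not} say that $T$ is continuous: it only says that $T(U)$ is a bounded set (indeed, it is exactly the $nr$-boundedness condition). Continuity of $T$ requires that for each $W$ one can find $U$ with $T(U)\subseteq W$, with no auxiliary factor $V$ on the left. Mimicking the $br$-argument too literally is precisely what causes the slip, because in Theorem~\ref{30} the target conclusion ``$T(B)$ is bounded'' genuinely has the shape $VT(B)\subseteq W$, whereas ``$T$ is continuous'' does not.

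The paper avoids this by first observing that a $cr$-bounded family is \emph{equicontinuous}: for every zero neighborhood $W$ there is a zero neighborhood $U$ with $T_{\alpha}(U)\subseteq W$ for all $\alpha$. With that in hand, the Fatou property (order closedness of $W$) lets one pass the pointwise supremum $T_{\alpha}(x)\uparrow T(x)$ through the containment to obtain $T(U)\subseteq W$ directly, which is the desired continuity. So the missing ingredient in your argument is this equicontinuity reformulation of $cr$-boundedness; once you insert it, the rest of your outline goes through exactly as written. The necessity direction in your proposal is fine and agrees with the paper.
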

\begin{proof}
Suppose $(T_{\alpha})$ is a bounded increasing net in ${\sf Hom^{b}_{cr}(X)}_{+}$. This implies that the set $(T_{\alpha})$ is equicontinuous in the sense that for each zero neighborhood $W\subseteq X$, there is a zero neighborhood $U$ such that $T_{\alpha}(U)\subseteq W$ for each $\alpha$. So, for each $x\in X_{+}$, the net $(T_{\alpha}(x))$ is bounded and increasing in $X$ so that has a supremum, namely, $\alpha_x$. Define $T_{\alpha}:X_{+}\to X_{+}$ via $T_{\alpha}(x)=\alpha_x$. It is an additive map. By \cite[Lemma 1]{Z2}, it extends to a positive group homomorphism $T:X\to X$. We need to show that $T\in {\sf Hom^{b}_{cr}(X)}$. It is clear that $T$ is order bounded. Moreover, it can be easily seen that $T(U) \subseteq W$ since $W$ has the Fatou property.

For the converse, assume that $(x_{\alpha})$ is a bounded increasing net in $X_{+}$. Define $T_{\alpha}:X\to X$ via $T_{\alpha}(x)=xx_{\alpha}$. It is easy to see that each $T_{\alpha}$ is continuous as well as order bounded. For an arbitrary zero neighborhood $W\subseteq X$, there is a zero neighborhood $U$ such that $U(x_{\alpha})\subseteq W$. It follows that $(T_{\alpha})$ is bounded and increasing. Thus, by the assumption, $T_{\alpha}\uparrow T$ for some $T\in {\sf Hom^{b}_{cr}(X)}$. Therefore, $T_{\alpha}({\sf1})\uparrow T({\sf1})$; that is $x_{\alpha}\uparrow T({\sf1})$, as claimed.
\end{proof}

in this step, we recall a ring version of \cite[Theorem 1.35]{AB}. The proof is essentially the same.
\begin{lem}\label{333}
Suppose $X$ is an $\ell$-ring and $I$ is an ideal of $X$. Then for a set $D\subseteq I_{+}$, $D\downarrow 0$ in $X$ if and only if $D\downarrow 0$ in $I$.
\end{lem}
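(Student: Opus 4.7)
The plan is to split the biconditional into its two implications and observe that, since being downward directed is an intrinsic property of $D$, everything reduces to comparing the infimum of $D$ computed in $X$ versus in $I$. The key structural fact to exploit is that $I$ is a solid subring (ideal), so any element of $X$ squeezed between $0$ and an element of $I$ automatically lies in $I$.

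For the forward direction, I would assume $D\downarrow 0$ in $X$ and note that $0\in I$ is a lower bound of $D$ in $I$. Any other lower bound $y\in I$ is a fortiori a lower bound in $X$, hence $y\leq 0$ since $\inf_X D = 0$. So $\inf_I D = 0$, and the downward directedness of $D$ is preserved, giving $D\downarrow 0$ in $I$.

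The reverse direction is where the solidity of $I$ comes in. Assume $D\downarrow 0$ in $I$ and let $x\in X$ be any lower bound of $D$. I would fix an arbitrary $d_0\in D\subseteq I_{+}$ and then use the $\ell$-group identity $x^{+}=x\vee 0\leq d_0\vee 0 = d_0$, together with $x^{+}\geq 0$, to sandwich $x^{+}$ in the order interval $[0,d_0]\subseteq I$. Solidity of $I$ then forces $x^{+}\in I$. Since $x\leq d$ and $d\geq 0$ for every $d\in D$, the same argument shows $x^{+}\leq d$ for every $d\in D$, so $x^{+}$ is a lower bound of $D$ inside $I$. The hypothesis $\inf_I D=0$ yields $x^{+}\leq 0$, hence $x^{+}=0$, i.e.\ $x\leq 0$. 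Thus $0$ is the greatest lower bound of $D$ in $X$, completing the proof.

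I do not anticipate any real obstacle here: the argument uses only the standard $\ell$-group manipulation of positive parts and the defining solidity of an ideal, and the ring structure of $X$ plays no role beyond allowing us to speak of ideals in this sense. This mirrors the vector-lattice original [AB, Theorem 1.35] line for line, as the paper already remarks.
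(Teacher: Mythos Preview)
Your proof is correct and is precisely the argument the paper has in mind: the paper does not write out a proof at all but simply remarks that it is ``essentially the same'' as \cite[Theorem~1.35]{AB}, which is exactly the positive-part/solidity argument you have reproduced. There is nothing to add or compare.
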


\begin{prop}\label{555}
Suppose $X$ is an order complete locally solid $f$-ring with unity and the Fatou property. If ${\sf Hom^{b}_{br}(X)}$ has the Lebesgue property then so is $X$.
\end{prop}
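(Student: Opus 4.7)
The plan is to embed $X$ into ${\sf Hom^{b}_{br}(X)}$ via the left regular representation $L\colon X\to {\sf Hom^{b}_{br}(X)}$, $L(x)(y)=xy$, and then pull back the Lebesgue property by evaluating at the unit $\mathsf{1}$. Starting from an arbitrary net $x_\alpha\downarrow 0$ in $X$, the goal is to show $x_\alpha\xrightarrow{\tau}0$ in $X$.

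First I would verify that each $L(x_\alpha)$ actually lies in ${\sf Hom^{b}_{br}(X)}$. Order-boundedness is immediate: since $x_\alpha\geq 0$, the $f$-ring structure forces $L(x_\alpha)[u,v]\subseteq [x_\alpha u,x_\alpha v]$. For $br$-boundedness, left multiplication by a fixed element is a continuous group endomorphism of the topological ring $X$; by \cite[Remark 2.4]{KZ} it is bounded in the underlying topological group sense, and since $X$ has a unity, Proposition \ref{101} converts this to ring-boundedness. The map $L$ is clearly order-preserving, and it is also order-reflecting, because $L(x)\geq 0$ iff $L(x)(\mathsf{1})=x\geq 0$.

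Next I would show that $L(x_\alpha)\downarrow 0$ in the lattice ring ${\sf Hom^{b}_{br}(X)}$. Monotonicity is clear since $x_\alpha\leq x_\beta$ and $y\in X_{+}$ yield $(x_\beta-x_\alpha)y\geq 0$. For the infimum, suppose $0\leq S\leq L(x_\alpha)$ in ${\sf Hom^{b}_{br}(X)}$ for all $\alpha$; since ${\sf Hom^{b}_{br}(X)}$ is an ideal of ${\sf Hom}^{b}(X)$ by Lemma \ref{20} and the order there is pointwise, this yields $0\leq S(y)\leq x_\alpha y$ in $X$ for each $y\in X_{+}$. The crucial step is that in an order complete Archimedean $f$-ring multiplication by a fixed positive element is order continuous (a standard consequence of the $f$-ring axiom, in the spirit of \cite[Theorem 3.15]{J}), so $x_\alpha y\downarrow 0$ in $X$; this forces $S(y)=0$ on $X_{+}$ and therefore $S=0$. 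I expect the order continuity of multiplication to be the genuine obstacle here: without it the infimum of the operators might be strictly positive even though each pointwise value $x_\alpha y$ is already small.

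Finally, applying the Lebesgue property of ${\sf Hom^{b}_{br}(X)}$ to $L(x_\alpha)\downarrow 0$ gives convergence in the topology of uniform convergence on bounded subsets of $X$. Since $\{\mathsf{1}\}$ is bounded by the standing assumption on singletons, for every zero neighborhood $W\subseteq X$ there is $\alpha_0$ with $L(x_\alpha)(\mathsf{1})=x_\alpha\in W$ for all $\alpha\geq\alpha_0$, which is precisely $x_\alpha\xrightarrow{\tau}0$, establishing the Lebesgue property of $X$.
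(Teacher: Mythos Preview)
Your argument is correct and follows essentially the same route as the paper's proof: represent $X$ inside ${\sf Hom^{b}_{br}(X)}$ by multiplication operators, use order continuity of multiplication in an order complete $f$-ring together with Lemma~\ref{20} to see that $x_\alpha\downarrow 0$ forces the associated operators to decrease to $0$, apply the Lebesgue hypothesis, and evaluate at~$\mathsf{1}$. The only cosmetic differences are that the paper uses right multiplications $T_\alpha(x)=xx_\alpha$ rather than the left regular representation, and it packages the ``infimum is zero in the ideal'' step via Lemma~\ref{333} instead of your direct pointwise argument; your detour through continuity and Proposition~\ref{101} to get $br$-boundedness is also slightly more elaborate than needed, since boundedness of $B$ and of the singleton $\{x_\alpha\}$ already give a neighborhood $V$ with $V(x_\alpha B)\subseteq W$ directly.
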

\begin{proof}
Suppose $(x_{\alpha})$ is a net in $X$ such that $x_{\alpha}\downarrow 0$. Define $T_{\alpha}:X\to X$ with $T_{\alpha}(x)=xx_{\alpha}$. It is easy to see that each $T_{\alpha}$ is $br$-bounded as well as order bounded. First, note that by using \cite[Theorem 1]{Z2}, we conclude that $T_{\alpha}\downarrow 0$ in ${\sf Hom^{b}(X)}$ if and only if $T_{\alpha}(x)\downarrow 0$ for each $x\in X_{+}$.  Furthermore, observe that by Lemma \ref{20} and Lemma \ref{333}, we conclude that $T_{\alpha}\downarrow 0$ in  ${\sf Hom^{b}_{br}(X)}$.
So, by the assumption, $T_{\alpha}\rightarrow 0$ uniformly on bounded sets. Therefore, $T_{\alpha}({\sf 1})\rightarrow 0$ in $X$; this means $(x_{\alpha})$ is a null net in $X$, as claimed.

\end{proof}
By using Lemma \ref{444} and Proposition \ref{555}, one may consider the following.
\begin{coro}
Suppose $X$ is a locally bounded order complete locally solid $f$-ring with  unity and the Fatou property. If ${\sf Hom^{b}_{nr}(X)}$ has the Lebesgue property then so is $X$.
\end{coro}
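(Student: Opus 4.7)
The plan is to derive the corollary by reducing it to Proposition \ref{555}, using Lemma \ref{444} as the bridge. Since local boundedness of $X$ is in the hypotheses, Lemma \ref{444} tells us that ${\sf Hom^{b}_{nr}(X)} = {\sf Hom^{b}_{br}(X)}$ as sets. The remaining work is to verify that this identification respects the structures relevant to the Lebesgue property, after which one simply quotes Proposition \ref{555}.

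First I would observe that the lattice structures on both ${\sf Hom^{b}_{nr}(X)}$ and ${\sf Hom^{b}_{br}(X)}$ are inherited from ${\sf Hom^{b}(X)}$, since by Lemma \ref{20} both are ideals of ${\sf Hom^{b}(X)}$. Consequently, order convergence (and in particular, $T_{\alpha}\downarrow 0$) has the same meaning in either space. Second, I would compare the two topologies: the ${\sf nr}$-topology is uniform convergence on a suitable zero neighborhood, while the ${\sf br}$-topology is uniform convergence on bounded sets. Local boundedness of $X$ provides a bounded zero neighborhood $U_0$, and by Proposition \ref{101} every bounded set $B$ is contained in $nU_0$ for some positive integer $n$. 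Using additivity of homomorphisms, uniform convergence on $U_0$ therefore implies uniform convergence on every bounded set, and the converse is immediate because $U_0$ itself is bounded. Hence the ${\sf nr}$-topology and the ${\sf br}$-topology coincide on the common underlying set.

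With these identifications in hand, the argument is immediate: assume ${\sf Hom^{b}_{nr}(X)}$ enjoys the Lebesgue property, and take any net $(T_{\alpha})$ in ${\sf Hom^{b}_{br}(X)}$ with $T_{\alpha}\downarrow 0$. By the agreement of the ordered vector structures, this is the same statement in ${\sf Hom^{b}_{nr}(X)}$, so $T_{\alpha}$ converges to $0$ in the ${\sf nr}$-topology, which by the previous paragraph is the ${\sf br}$-topology. Thus ${\sf Hom^{b}_{br}(X)}$ has the Lebesgue property, and Proposition \ref{555} delivers the Lebesgue property of $X$.

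I do not expect any substantial obstacle; the only point that requires any care is the topology comparison in the second paragraph, where one must use local boundedness together with Proposition \ref{101} to ensure that boundedness in the topological ring sense matches boundedness in the underlying group, so that $B\subseteq nU_0$ is legitimately available.
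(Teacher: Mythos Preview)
Your proposal is correct and follows exactly the route indicated by the paper, which derives the corollary directly from Lemma~\ref{444} together with Proposition~\ref{555}. You simply make explicit what the paper leaves implicit, namely that the set-level identification of Lemma~\ref{444} is compatible with both the order and the relevant topologies; one small slip is the phrase ``ordered vector structures,'' which should read ``ordered group (or lattice) structures'' in this ring setting.
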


For the converse of Proposition \ref{555}, we have the following.
\begin{thm}\label{1}
Suppose $X$ is a locally solid $f$-ring that possesses $AM$ and Levi properties and $Y$ is an order complete locally solid $f$-ring. If $Y$ has the Lebesgue property, then so is ${\sf Hom^{b}}(X,Y)$.
\end{thm}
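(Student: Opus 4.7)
The plan is to mimic the classical Riesz-space argument showing that $L_b(X,Y)$ inherits the Lebesgue property from $Y$, leveraging the crucial observation from Theorem \ref{12} that $AM$ plus Levi on $X$ makes bounded and order bounded subsets of $X$ coincide.

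First I would unpack what $T_\alpha \downarrow 0$ means in ${\sf Hom^{b}}(X,Y)$. By the Riesz--Kantorovich type formulas of \cite[Theorem 1]{Z2}, the ordering on ${\sf Hom^{b}}(X,Y)$ is pointwise on $X_+$, so $T_\alpha \downarrow 0$ is equivalent to $T_\alpha(x) \downarrow 0$ in $Y$ for every $x \in X_+$; in particular each $T_\alpha$ is positive. The appropriate topology on ${\sf Hom^{b}}(X,Y)$ in this ring-theoretic setting is uniform convergence on (order) bounded subsets of $X$, and under the hypotheses on $X$ those two notions agree by Theorem \ref{12}, so it suffices to show uniform convergence on an arbitrary bounded $B \subseteq X$.

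Next I would fix such a $B$ and apply Theorem \ref{12} to find $w \in X_+$ with $B \subseteq [-w, w]$. Since $T_\alpha(w) \downarrow 0$ in $Y$, the Lebesgue property of $Y$ yields $T_\alpha(w) \to 0$ topologically. For each $x \in B$, positivity of $T_\alpha$ together with $Y$ being a lattice gives
\[ |T_\alpha(x)| \leq T_\alpha(x^+) + T_\alpha(x^-) = T_\alpha(|x|) \leq T_\alpha(w). \]
Finally, given a zero neighborhood $W$ in $Y$, choose by local solidness a solid zero neighborhood $W' \subseteq W$. Eventually $T_\alpha(w) \in W'$, and solidness then forces $T_\alpha(x) \in W' \subseteq W$ for every $x \in B$, delivering the desired uniform convergence $T_\alpha \to 0$ on $B$.

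The main obstacle is really just bookkeeping: pinning down that $T_\alpha \downarrow 0$ in ${\sf Hom^{b}}(X,Y)$ translates to pointwise convergence, and identifying the correct topology on ${\sf Hom^{b}}(X,Y)$ via the development in \cite{Z2}. Once those identifications are in hand, the whole argument collapses to the one-line pointwise estimate $|T_\alpha(x)| \leq T_\alpha(w)$ combined with the Lebesgue property of $Y$ applied to the single element $w$.
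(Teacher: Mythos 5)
Your proof is correct, and it reaches the conclusion by a cleaner route than the paper's. Both arguments share the same setup: identify ${\sf Hom^{b}}(X,Y)$ with ${\sf Hom^{b}_{br}}(X,Y)$ (via Proposition \ref{14}, which you should cite explicitly so that ``the topology on ${\sf Hom^{b}}(X,Y)$'' is well defined), use the $AM$ and Levi hypotheses through Theorem \ref{12} to replace a bounded set $B$ by an order interval $[-w,w]$, and invoke the Riesz--Kantorovich description of the lattice operations to read $T_{\alpha}\downarrow 0$ pointwise on $X_{+}$. Where you diverge is the uniformity step: the paper forms the doubly indexed family $A=\{T_{\alpha}(x):\alpha\in I,\ x\in[0,u]\}$, orders $\Lambda=I\times[0,u]$ lexicographically, passes to $A^{\wedge}$ to manufacture a decreasing net in $Y_{+}$, and applies the Lebesgue property to that net before translating back to uniform convergence on $B_{+}$. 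You instead apply the Lebesgue property to the single net $T_{\alpha}(w)$ and obtain uniformity over all of $B$ from the domination $|T_{\alpha}(x)|\le T_{\alpha}(x^{+})+T_{\alpha}(x^{-})=T_{\alpha}(|x|)\le T_{\alpha}(w)$ together with solidness of a basic zero neighborhood. This buys a shorter and more transparent argument that avoids the delicate point in the paper of why the lexicographically indexed family is (after passing to $A^{\wedge}$) a decreasing net converging to the right limit; the only thing you give up is that your estimate leans on positivity and additivity of each $T_{\alpha}$, which is harmless here since $T_{\alpha}\downarrow 0$ forces $T_{\alpha}\ge 0$.
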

\begin{proof}
First, observe that by Proposition \ref{14}, ${\sf Hom^{b}}(X,Y)={\sf Hom ^{b}_{br}}(X,Y)$. Suppose $(T_{\alpha})_{\alpha\in I}$ is a net in ${\sf Hom ^{b}_{br}}(X,Y)$ such that $T_{\alpha}\downarrow 0$.
Choose a bounded set $B\subseteq X$; W.L.O.G, we may assume that $B$ is solid, otherwise, consider the solid hull of $B$ which is certainly bounded by \cite[Lemma 5]{Z2}. By Corollary \ref{13}, $B$ is order bounded.
The remaining part of the  proof has the same line as in \cite[Theorem 5]{Z4}.
Put $A=\{T_{\alpha}(x), \alpha\in I, x\in B_{+}\}$. Again, W.L.O.G, assume that $B_{+}=[0,u]$, in which $u\in X_{+}$. Define $\Lambda=I\times [0,u]$. Certainly, $\Lambda$ is a directed set while we consider it with the lexicographic order, namely, $(\alpha,x)\leq (\beta,y)$ if $\alpha<\beta$ or $\alpha=\beta$ and $x\leq y$. In notation, $A=(y_{\lambda})_{\lambda\in \Lambda}\geq {\sf 0}$. So, by considering $A^{\wedge}$, one can assume $A$ as a decreasing  net in $Y_{+}$. Therefore, it has an infimum. We claim that $A\downarrow 0$; otherwise, there is a $0\neq y\in Y_{+}$ such that $y_{\lambda}\geq y$ for each $\lambda\in \Lambda$. Therefore, for each $\alpha$ and each $x\in B_{+}$, $T_{\alpha}(x)\geq y$ which is in contradiction with $T_{\alpha}\downarrow 0$. By the assumption, $y_{\lambda}\rightarrow 0$ in $Y$. Therefore, for an arbitrary zero neighborhood $V\subseteq Y$, there exists a $\lambda_0=(\alpha_0,x_0)$ such that $y_{\lambda}\in V$ for each $\lambda\geq\lambda_0$. Suppose $\lambda=(\alpha,x)$. So, for each $\alpha>\alpha_0$ and for each $x\in B_{+}$,  $T_{\alpha}(x)\in V$. Since $B\subseteq B_{+}-B_{+}$, we conclude that  $T_{\alpha}\rightarrow 0$ in ${\sf Hom ^{b}_{br}}(X,Y)$.
\end{proof}
\begin{rem}
Observe that hypotheses in Theorem \ref{1} are essential and can not be removed. Consider locally solid $\ell$-ring $X=c_0$ with norm topology, pointwise ordering and coordinate-wise multiplication.
It possesses the $AM$-property and its topology is Lebesgue but it fails to have the Levi property. Suppose $(P_n)$ is the sequence of coordinate-wise group homomorphisms on $X$, namely $P_n((x_m))=(x_1,\ldots,x_n,0,\ldots)$. Each $P_n$ is $br$-bounded and $P_n\uparrow I$, where $I$ is the identity group homomorphism on $X$. But $P_n\nrightarrow I$ uniformly on the unit ball of $X$.

Moreover, consider $Y=\ell_1$ with norm topology, pointwise ordering and coordinate-wise multiplication; it has the Lebesgue and the Levi properties but it fails to have the $AM$-property. Again, if $(P_n)$ is the sequence of coordinate-wise group homomorphisms on $Y$, $P_n\uparrow I$ but certainly not in the topology  of uniform convergence on bounded sets.

Just observe that  by Remark \ref{666}, the notions of boundedness in topological vector space and topological ring setting coincide.
\end{rem}
\begin{prop}
Suppose $X$ is an order complete locally solid $f$-ring with  unity and the Fatou property. If ${\sf Hom^{b}_{cr}(X)}$ has the Lebesgue property then so is $X$.
\end{prop}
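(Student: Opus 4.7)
The plan is to mirror the proof of Proposition \ref{555}, replacing ``$br$-bounded'' with ``continuous'' throughout. Given a net $x_\alpha\downarrow 0$ in $X$, I would set $T_\alpha\colon X\to X$ by $T_\alpha(x):=xx_\alpha$. Joint continuity of ring multiplication at $0$ makes each $T_\alpha$ continuous, and the $f$-ring inequality $|xy|\le|x||y|$ makes each $T_\alpha$ order bounded, so $T_\alpha\in{\sf Hom^{b}_{cr}(X)}$.

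Next I would establish $T_\alpha\downarrow 0$ in ${\sf Hom^{b}_{cr}(X)}$. By \cite[Theorem 1]{Z2}, $T_\alpha\downarrow 0$ in ${\sf Hom^{b}(X)}$ is equivalent to $T_\alpha(x)\downarrow 0$ for each $x\in X_+$, which is immediate from \cite[Theorem 3.15]{J} (order continuity of multiplication by a positive element in an order complete $f$-ring) together with $x_\alpha\downarrow 0$. Lemma \ref{20}(iii) exhibits ${\sf Hom^{b}_{cr}(X)}$ as an ideal of ${\sf Hom^{b}(X)}$, so Lemma \ref{333} transports the downward convergence to the ideal, yielding $T_\alpha\downarrow 0$ in ${\sf Hom^{b}_{cr}(X)}$.

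The Lebesgue hypothesis on ${\sf Hom^{b}_{cr}(X)}$ now delivers $T_\alpha\to 0$ in the $cr$-topology, and the final step is to extract $x_\alpha=T_\alpha({\sf 1})\to 0$ in $X$. The natural attempt: given a zero neighbourhood $W^*\subseteq X$, use joint continuity of multiplication to pick zero neighbourhoods $V_0,W_0$ with $V_0 W_0\subseteq W^*$; the $cr$-convergence at $W_0$ supplies a zero neighbourhood $U$ such that, specialising to $V=V_0$, eventually $T_\alpha(U)\subseteq V_0 W_0\subseteq W^*$. Since $\{{\sf 1}\}$ is bounded under the standing hypothesis, ${\sf 1}\in nU$ for some integer $n$, whence $x_\alpha\in nW^*$ eventually. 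I expect this extraction to be the main obstacle: the transparent ``$B=\{{\sf 1}\}$ is bounded'' shortcut that closed the $br$-case in Proposition \ref{555} breaks down here, because the integer $n$ is coupled to the choice of $W^*$ through $U$. To close the loop, I would work with a base of solid, order closed zero neighbourhoods (available by the Fatou hypothesis) and use the monotonicity $x_\alpha\downarrow 0$ to dominate $x_\alpha$ by an element already belonging to $W^*$, thus upgrading ``$x_\alpha\in nW^*$ eventually'' to ``$x_\alpha\in W^*$ eventually''.
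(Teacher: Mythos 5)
Your proposal reproduces the paper's argument step for step: the same homomorphisms $T_{\alpha}(x)=xx_{\alpha}$, the same verification that each $T_{\alpha}$ is continuous and order bounded, the same use of \cite[Theorem 1]{Z2} together with order continuity of multiplication to get $T_{\alpha}\downarrow 0$ in ${\sf Hom^{b}}(X)$, the same passage through Lemma \ref{20}(iii) and Lemma \ref{333} to transfer this to the ideal ${\sf Hom^{b}_{cr}}(X)$, and the same appeal to the Lebesgue hypothesis. Up to that point everything is fine and identical to the paper.

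The divergence is in the extraction of $x_{\alpha}=T_{\alpha}({\sf 1})\to 0$ from $cr$-convergence, and here you have correctly located the soft spot: the paper simply asserts this step, whereas the quantifier structure of the $cr$-topology only yields, for each zero neighbourhood $W^{*}$, a neighbourhood $U$ with $T_{\alpha}(U)\subseteq W^{*}$ eventually, hence $x_{\alpha}\in nW^{*}$ eventually with $n$ depending on $U$ and therefore on $W^{*}$. However, the repair you sketch does not close this. Knowing $x_{\alpha}\downarrow 0$ and $x_{\alpha_{1}}\in nW^{*}$ for some $\alpha_{1}$, solidity of $W^{*}$ (and of $nW^{*}$, via the Riesz decomposition property) only returns you to $x_{\alpha}\in nW^{*}$ for $\alpha\geq\alpha_{1}$; it does not produce a dominating element \emph{inside} $W^{*}$. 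And order-closedness of $W^{*}$ cannot be the missing ingredient either: if ``$x_{\alpha}\downarrow 0$ plus $W^{*}$ solid and order closed'' forced $x_{\alpha}\in W^{*}$ eventually, every Fatou topology would automatically be Lebesgue, which is false (e.g.\ $\ell_{\infty}$ with the uniform norm). So as written your final step is a genuine gap. A workable alternative is to avoid the integer $n$ altogether: from $T_{\alpha}(U)\subseteq W^{*}$ eventually one gets $|u|x_{\alpha}\in W^{*}$ for all $u\in U$, and one should exploit the $f$-ring identity $(|u_{1}|\vee\dots\vee|u_{n}|)x_{\alpha}=(|u_{1}|x_{\alpha})\vee\dots\vee(|u_{n}|x_{\alpha})$ together with ${\sf 1}\leq n(|u_{1}|\vee\dots\vee|u_{n}|)$ for a decomposition ${\sf 1}=u_{1}+\dots+u_{n}$, $u_{i}\in U$ --- but even this only bounds $x_{\alpha}$ in $n^{2}W^{*}$, so some additional hypothesis or idea is needed. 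You should flag explicitly that this step is also unjustified in the source you are mirroring rather than present the sketch as if it completes the proof.
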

\begin{proof}
Suppose $(x_{\alpha})$ is a net in $X$ such that $x_{\alpha}\downarrow 0$. Define $T_{\alpha}:X\to X$ with $T_{\alpha}(x)=xx_{\alpha}$. It is easy to see that each $T_{\alpha}$ is continuous as well as order bounded. Observe that by Lemma \ref{20} and Lemma \ref{333}, we conclude that $T_{\alpha}\downarrow 0$ in  ${\sf Hom^{b}_{cr}(X)}$.
So, by the assumption, $T_{\alpha}\rightarrow 0$ in the $cr$-convergence topology. Therefore, $T_{\alpha}({\sf 1})\rightarrow 0$ in $X$; this means $(x_{\alpha})$ is a null net in $X$, as claimed.


\end{proof}


\begin{thebibliography}{1}
\bibitem{AB} C. D. Aliprantis and O. Burkinshaw, Positive operators, 2nd edition,
Springer, 2006.
\bibitem{H} L. Hong, {\em Locally solid topological lattice-ordered groups}, Archivum Mathematicum, {\bf 51(2)} (2015), 107–-128.
   \bibitem{Taq} T. Husain, Introduction to topological groups,
W. B. Saunders Company, 1966.
\bibitem{J} D. G. Johnson, {\em A structure theory for a class of lattice-ordered rings}, Acta mathematica. 104. Imprime le 19 decembre 1960.
\bibitem{KZ} Lj. D.R. Kocinac and O. Zabeti, {\em Topological groups of bounded homomorphisms on a topological
group}, Filomat, {\bf 30(3)} (2016), 541–-546.
\bibitem{MZ} M. Mirzavaziri and O. Zabeti, Topological rings of bounded and compact group homomorphisms on
a topological ring. J. Adv. Res. Pure Math, {\bf 3(2)} (2011), 100--106.
\bibitem{Z} O. Zabeti, {\em A few remarks on boundedness in topological groups and topological modules,} Hacet. J. Math. Stat., {\bf 48(2)} (2019), 420--426.
\bibitem{Z3} O. Zabeti, {\em A few remarks on bounded homomorphisms acting on topological lattice groups and topological rings,} Submitted, arXiv: 1811.04296.
\bibitem{Z4} O. Zabeti, {\em $AM$-property in locally solid vector lattices and applications,} Submitted, arXiv: 1912.00141.
\bibitem{Z2} O. Zabeti, {\em Lattice structure on bounded homomorphisms between topological lattice rings,} Vladikavkaz. Math. J., {\bf 21(3)} (2019),  14--21.

\end{thebibliography}
\end{document}